\documentclass[A4,12pt]{article}
\usepackage{amsfonts,bm}
\usepackage{amsmath,amsthm}
\usepackage{amssymb,amscd}
\usepackage{mathtools}
\usepackage{blkarray, bigstrut}
\usepackage{enumerate}
\usepackage{color}
\usepackage{cite}
\usepackage{esint}
\usepackage{tikz}
\usepackage{tikzscale}
\usepackage[absolute,overlay]{textpos}
\usepackage{multicol}
\usepackage{framed}
\usepackage{lscape}
\usepackage{caption}
\usepackage{subcaption}
\usepackage[inline]{enumitem}
\usepackage{float}
\usepackage{mathtools}
\usepackage[title]{appendix}
\usepackage{relsize}
\usepackage{etoolbox}
\usepackage{MnSymbol}
\usepackage{enumitem}
\usepackage{pgf}
\usepackage{array}
\usepackage[utf8]{inputenc}
\usetikzlibrary{calc,fit,matrix,arrows,automata,positioning}

\makeatletter
\newcommand{\rmnum}[1]{\romannumeral #1}
\newcommand{\Rmnum}[1]{\expandafter\@slowromancap\romannumeral #1@}

\setlength\topmargin{-0.15in}

\setlength\hoffset{-0.3in}
\setlength\headheight{0in}
\setlength\headsep{0.5in}
\setlength\textheight{8.0in}
\setlength\textwidth{6.5in}
\setlength\oddsidemargin{0in}
\setlength\evensidemargin{0in}
\setlength\parindent{0.1in}
\setlength\parskip{0.1in}
\setlength\footskip{0.5in}
\setlength\topskip{0.1in}
\setlength\parindent{0pt}
	
\oddsidemargin 0.3in
\evensidemargin 2.0in

\def\Xint#1{\mathchoice
{\XXint\displaystyle\textstyle{#1}}%
{\XXint\textstyle\scriptstyle{#1}}%
{\XXint\scriptstyle\scriptscriptstyle{#1}}%
{\XXint\scriptscriptstyle\scriptscriptstyle{#1}}%
\!\int}
\def\XXint#1#2#3{{\setbox0=\hbox{$#1{#2#3}{\int}$}
\vcenter{\hbox{$#2#3$}}\kern-.5\wd0}}

\def\dashint{\Xint-}

\theoremstyle{definition}
\newtheorem{theorem}{Theorem}[section]
\newtheorem{corollary}[theorem]{Corollary}
\newtheorem{lemma}[theorem]{Lemma}

\newtheorem{proposition}[theorem]{Proposition}

\theoremstyle{remark}
\newtheorem{remark}{Remark}[section]

\newcommand{\qbinom}[2]{\left[{{#1}\atop#2}\right]}

\tikzset{%
  highlight/.style={rectangle,rounded corners,fill=red!15,draw,fill opacity=0.5,thick,inner sep=0pt}
}

\begin{document}
\renewcommand{\arraystretch}{1.5}
\title{Distribution of a second-class particle's position in the two-species ASEP with a special initial configuration}
\author{\textbf{Eunghyun Lee\footnote{eunghyun.lee@nu.edu.kz}}\\[3pt]and \\[3pt] \textbf{Zhanibek Tokebayev} \\[8pt] {\text{Department of Mathematics, Nazarbayev University}}
                                         \date{}   \\ {\text{Kazakhstan}}   }

\date{}
\maketitle
\begin{abstract}
\noindent In this paper, we consider the two-species asymmetric simple exclusion process (ASEP) consisting of $N-1$ first-class particles and one second-class particle. We assume that all particles are located at arbitrary positions but the second-class particle is the rightmost particle at time $t=0$. We find the exact formula of the distribution of the second-class particle's position at time $t$ by directly using the transition probabilities of the two-species ASEP, which is a different approach from the coupling method Tracy and Widom used in \cite{Tracy-Widom-2009}.
\end{abstract}
\section{Introduction}
The multi-species asymmetric simple exclusion process is a generalization of the asymmetric simple exclusion process (ASEP)  in the sense that each particle may belong to a different \textit{species}. In principle, particles follow the rules of the ASEP, that is, a particle tries to jump to the right with probability $p$ or to the left with probability $q=1-p$ after exponential waiting time with parameter $1$. But a particle belonging to a \textit{higher} species is given priority when it tries to jump to a site already occupied by a \textit{lower} species. To be more specific, let us denote  a species by an integer $l$. If a particle belonging to a species $l$ tries to jump to a site already occupied by a particle belonging to $l'<l$, then the  particle belonging to $l$ can jump by interchanging its position with the position of the particle belonging to $l'$. However, if it tries to jump to a site occupied by a particle $l''\geq l$, the jump is not allowed. In the two-species ASEP, it is common that a \textit{lower} species particle is called a second-class particle and a \textit{higher} species particle is called  a first-class particle.

 The multi-species ASEP is mainly being considered on the one-dimensional finite integer lattice or infinite lattice $\mathbb{Z}$. Some featured results about the multi-species ASEP on a finite integer lattice are found in  \cite{Ayyer-Finn-Roy-2018,Derrida-Evans-Hakim-Pasquier-1993,Evans-Ferrari-Mallick-2009,Ferrari-Martin,Prolhac-Evans-Mallick-2009} where the steady state of the process is one of the main interests. In the multi-species ASEP on $\mathbb{Z}$, the exact formulas of the transition probability and some probability distributions  were studied in  \cite{Chatterjee-Schutz-2010,Gier-Mead-Wheeler-2021,Kuan-2020,Lee-2017,Lee-2018,Lee-2020,Lee-Raimbekov-2022,Tracy-Widom-2009, Tracy-Widom-2013}. Also, the two-species with a special initial configuration, called the step initial condition, has drawn an attention in studying a partial differential equation as the hydrodynamic limit of the one-dimensional ASEP \cite{Ferrari-Kipnis-1995,Mountford-Giuol-2005}, the competition interface in the last-passage percolation \cite{Ferrari-Pimentel-2005} and some asymptotics in the realm of the KPZ statistics \cite{Nejjar-2020}. Moreover, the multi-species ASEP was recently studied in more general context, the coloured stochastic vertex model \cite{Borodin-Wheeler,Borodin-Bufetov}.

In this paper, we consider the two-species ASEP.   The main purpose of this paper is to provide the exact formula of the probability distribution of the second-class particle's position at time $t$ for an $N$-particle system with $(N-1)$ first-class particles and one second-class particle. The initial configuration of our interest is that the second-class particle is the rightmost particle and the positions of the particles are arbitrarily fixed. In fact, Tracy and Widom have already found the probability distribution of the second-class particle's position at time $t$ for an infinite system with all positive sites occupied by first-class particles and the origin occupied by a  second-class particle and all negative sites unoccupied at $t=0$ \cite{Tracy-Widom-2009}. The method Tracy and Widom used is the coupling method for two (single-species) ASEPs with a single discrepancy in their initial configurations where the discrepancy can be  viewed as the second-class particle. But the coupling method in \cite[Section 2]{Tracy-Widom-2009} is for a system with only one second-class particle, and it is not clear if the coupling method in Tracy and Widom's work can be extended to a system with multiple second-class particles. In this paper, we provide a different method for the  probability distribution of the second-class particle's position. Our approach  is to use the transition probabilities of the two-species ASEP. Since the transition probabilities of the two-species ASEP are fully known \cite{Lee-2020}, it should be possible to find the probability distribution of a second-class particle's position even when there are multiple second-class particles initially. The method using the transition probabilities of the two-species ASEP is straightforward in the sense that the required probability is obtained by summing the transition probabilities over all possible configurations but a \textit{huge} computation is needed for a large system or the proof for a general $N$-particle system.  One contribution of this paper is to provide a method for the \textit{huge} computation although we focus on the system with one second-class particle. It remains to be seen how much useful this method is for multiple second-class particle cases, but in the authors' another ongoing work, we could successfully obtain the exact formula of the probability distribution of the leftmost second-class particle's position in a small system (with one first-class particle and three second-class particles) by using some techniques in this paper. Also, since we assume that  the positions of the particles are arbitrarily fixed initially (but the second-class particle is the rightmost particle) in this paper, we expect that our formula can be used for a different special initial condition such as \textit{flat initial condition}.

To overview our approach and state the main result, let us denote a state of the process by $(X,\nu)$ where $X = (x_1,\dots, x_N)$ with $x_1<\cdots <x_N$  represents the positions of $N$ particles, and $\nu$ is a permutation  of  the multi-set $[1,2,\dots,2]$ with cardinality $N$, representing the order of particles. (A permutation of a multi-set is an ordered arrangement of all elements of the multi-set. Thus, the multi-set $[1,2,\dots,2]$ with cardinality $N$ has $N$ permutations.) Here, ``1" represents a particle belonging to species 1 (a second-class particle) and ``2" represents a particle belonging to species 2 (a first-class particle). Let us denote by $\nu(n)$ the $n$th number in the permutation $\nu$ and denote by  $\nu_n$  the permutation $\nu$ with  $\nu(n) = 1$. The  probability that the process is at state $(X,\pi)$ at time $t$ given that the initial configuration is $(Y,\nu)$ is denoted by  $P_{(Y,\nu)}(X,\pi;t)$.  For simplicity of notations,  we write   $P_{(Y,\nu_N)}(X,\nu_n;t) = P_Y(X,\nu_n;t)$ because, in this paper, we will consider only $\nu_N$ for the initial order of particles. One of the authors of this paper found the transition probabilities of the $N$-particle multi-species ASEP with arbitrary combination of species \cite{Lee-2020}. In particular, the transition probabilities of interest in this paper are given by
\begin{equation}\label{533pm813}
P_Y(X,\nu_n;t)= \sum_{\sigma \in \mathcal{S}_N}\dashint_{c}\cdots\dashint_{c} [\mathbf{A}_{\sigma}]_{\nu_n,\nu_N}\prod_{i=1}^N\Big(\xi_{\sigma(i)}^{x_i - y_{\sigma(i)}-1} e^{(\frac{p}{\xi_i}+q\xi_i - 1)t}\Big)d\xi_1\cdots d\xi_N
\end{equation}
where $c$ is a counterclockwise circle centered at the origin with sufficiently small radius $r$ and $[\mathbf{A}_{\sigma}]_{\nu_n,\nu_N}$ is in the form of
\begin{equation*}
\prod_{(\beta,\alpha)}R_{\beta\alpha}
\end{equation*}
where the product is taken over all inversions $(\beta,\alpha)$ in $\sigma$ and $R_{\beta\alpha}$ is one of
\begin{equation}\label{1100am911}
S_{\beta\alpha} =  -\frac{p+q\xi_{\alpha}\xi_{\beta} - \xi_{\beta}}{p+q\xi_{\alpha}\xi_{\beta} - \xi_{\alpha}},~~ pT_{\beta\alpha} = \frac{p(\xi_{\beta}-\xi_{\alpha})}{p+q\xi_{\alpha}\xi_{\beta} - \xi_{\alpha}},~~ Q_{\beta\alpha} =\frac{(p-q\xi_{\beta})(\xi_{\alpha}-1)}{p+q\xi_{\alpha}\xi_{\beta} - \xi_{\alpha}}
\end{equation}
or zero \cite{Lee-2020,Lee-Raimbekov-2022}. How to choose $R_{\beta\alpha}$ from $S_{\beta\alpha}, pT_{\beta\alpha}, Q_{\beta\alpha}, 0$ for each inversion $(\beta,\alpha)$ depends on $\sigma$ and $\nu_n$. See Appendix \ref{351am129} for the details on $[\mathbf{A}_{\sigma}]_{\nu_n,\nu_N}$ or directly see Theorem 1.2 (a), Theorem 1.4, Theorem 1.6, Theorem 1.7, and Proposition 1.8 in \cite{Lee-Raimbekov-2022} for the explicit formulas of $[\mathbf{A}_{\sigma}]_{\nu_n,\nu_N}$. Here, $\sum_{\sigma \in \mathcal{S}_N}$ implies the sum over all permutations of the symmetric group $\mathcal{S}_N$ and  $\dashint$ implies $(1/2\pi i)\int$ throughout this paper. We recall that, in the case of the single-species ASEP,  the formula of the transition probabilities,
\begin{equation*}
P_Y(X;t)= \sum_{\sigma \in \mathcal{S}_N}\dashint_{c}\cdots\dashint_{c} {A}_{\sigma}\prod_{i=1}^N\Big(\xi_{\sigma(i)}^{x_i - y_{\sigma(i)}-1} e^{(\frac{p}{\xi_i}+q\xi_i - 1)t}\Big)d\xi_1\cdots d\xi_N
\end{equation*}
where
\begin{equation}\label{126am830}
A_{\sigma} = \prod_{(\beta,\alpha)}S_{\beta\alpha}
\end{equation}
was obtained by Tracy and Widom \cite{Tracy-Widom-2008}.
\subsection{Main results}
Let us denote by $\mathbb{P}$ the probability measure of the process with the initial state $(Y, \nu_N)$ and let $\eta(t)$ be the random variable of the  position of the second-class particle at time $t$. Then, it is obvious that
 \begin{equation}\label{1051pm527}
 \mathbb{P}(\eta(t) = x) = \sum_{\substack{X\,\textrm{with} \\x_N = x}} P_{Y}(X,\nu_N;t) + \cdots + \sum_{\substack{X\,\textrm{with} \\x_2 = x}} P_{Y}(X,\nu_{2};t)  + \sum_{\substack{X\,\textrm{with} \\x_1 = x}} P_{Y}(X,\nu_1;t).
\end{equation}
It is not hard to compute (\ref{1051pm527}) for a small system such as $N=2$, but even for $N=3,4$, it requires a tedious and long computation together with contour deformations. Moreover, it is not easy to even conjecture the formula for general $N$ from the results for $N=2,3,4$. One contribution of this paper is to provide a novel idea to evaluate the sum (\ref{1051pm527}) to enable us to prove the formula for general $N$. We introduce some notations that will be used throughout the paper.
\subsubsection{Notations}
We write $\mathcal{S}_U$ for the set of all permutations on a finite set $U$. If $U = \{1,\dots, n\}$, we simply write $\mathcal{S}_U =\mathcal{S}_n$.
For a set $U = \{u_1,\dots, u_n\}$ of positive integers with $u_1<\cdots <u_n$, we write $\bm{\xi}_U = (\xi_{u_1},\dots, \xi_{u_n})\in \mathbb{C}^n$ and $Y_U = (y_{u_1},\dots, y_{u_n})$, an ordered $n$-tuple of integers with $y_{u_1}<\cdots < y_{u_n}$. If $U = \{1,\dots, N\}$ where $N$ is the total number of particles in the system, then we simply write $\bm{\xi}_U = \bm{\xi}$ and $Y_U = Y$.
For a nonempty finite set of positive integers $U$,  define $g_U: U \to \mathbb{Z}$ by $g_U(u) = i$ where $i$ implies that $u$ is the $i$th smallest number in $U$.  For a nonempty subset $S$ of $U$,  we define
\begin{equation*}
\Sigma_U(S) := \sum_{s \in S}g_U(s)
\end{equation*}
and $\Sigma_U(\emptyset) := 0$. For example, if $U=\{2,4,7\}$, then $g_U(2)=1, g_U(4) = 2, g_U(7) = 3$, and for $S=\{2,7\} \subset U$,
\begin{equation*}
\Sigma_{U}(S) := g_U(2) +g_U(7) = 4.
\end{equation*}
If $U =\{1,\dots, n\}$ for a given positive integer $n$, then  we write $\Sigma_U(S) = \Sigma(S)$ for simplicity, which implies the sum of all elements in $S$.
For $U = \{u_1,\dots, u_n\}$, let
\begin{equation*}
J(\bm{\xi}_U)=J(\xi_{u_1},\dots, \xi_{u_n}) =\frac{1}{\displaystyle \prod_{u \in U}(\xi_{u}-1)},~I(\bm{\xi}_U)  = (\xi_{u_1}\cdots \xi_{u_n} - 1)J(\bm{\xi}_U)
\end{equation*}
and
\begin{equation*}
W_{t,x,Y_U}(\bm{\xi}_U) = \prod_{i=1}^n\Big(\xi_{u_i}^{x - y_{u_i}-1}e^{(\frac{p}{\xi_{u_i}}+ q\xi_{u_i} -1)t}\Big).
\end{equation*}
We define
\begin{equation*}
\prod_{i = m}^n a_i := \begin{cases}
a_ma_{m+1}\cdots a_n &~\textrm{if}~n \geq m,\\[10pt]
1 &~\textrm{if}~n = m-1.
\end{cases}
\end{equation*}
\subsubsection{Statements of results}
\begin{theorem}\label{102am830}
If the initial configuration is $(Y, 2\cdots21)$, then the probability distribution of the second-class particle's position at time $t$, denoted by $\eta(t)$,  is
\begin{equation}\label{510pm829}
\mathbb{P}(\eta(t) = x) = \sum_{\emptyset \neq  S\subset \{1,\dots, N\}} c_S~\dashint_c \cdots \dashint_c \Big(\prod_{\substack{\alpha<\beta,\\ \alpha,\beta \in S}} T_{\beta\alpha}\Big)I(\bm{\xi}_S)W_{t,x,Y_S}(\bm{\xi}_S) d\bm{\xi}_S
\end{equation}
 where
 \begin{equation}\label{511pm829}
c_S=
\begin{cases}
~\displaystyle \bigg(\prod_{i=1}^{|S|-1}(q^i-p^i)\bigg)\Big(\frac{q}{p}\Big)^{\Sigma(S^c) - |S^c|(|S^c|+1)/2}&~~\textrm{if}~N \in S,\\[20pt]
~\displaystyle \frac{1}{p^{|S|}}\bigg(\prod_{i=1}^{|S|}(q^i-p^i)\bigg)\Big(\frac{q}{p}\Big)^{\Sigma(S^c) - |S^c|(|S^c|-1)/2-N}&~~\textrm{if}~N \notin S
\end{cases}
\end{equation}
where $S^c = \{1,\dots, N\} \setminus S$ and $T_{\beta\alpha}$ is given in (\ref{1100am911}).
\end{theorem}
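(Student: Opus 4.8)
The plan is to evaluate the right--hand side of \eqref{1051pm527} directly, by substituting the transition--probability formula \eqref{533pm813} and then reducing the resulting multiple sum through geometric summation over configurations, contour deformation, and residue bookkeeping.

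\emph{Geometric summation.} First I would fix $n$ and plug \eqref{533pm813} into the $n$-th term of \eqref{1051pm527}. Since $x_n=x$ is held fixed, the sum over configurations $X$ splits into a ``left block'' $x_1<\cdots<x_{n-1}<x$ and a ``right block'' $x<x_{n+1}<\cdots<x_N$. After interchanging this sum with the $\bm{\xi}$-integrals --- which I would justify by first deforming the contours, as in \cite{Lee-2020}, so that $|\xi_{\sigma(1)}\cdots\xi_{\sigma(k)}|>1$ for $k\le n-1$ and $|\xi_{\sigma(k)}\cdots\xi_{\sigma(N)}|<1$ for $k\ge n+1$, or else by an analytic--continuation argument in the contour radii --- each block is a telescoping geometric series; for the left block one gets
\[
\sum_{x_1<\cdots<x_{n-1}<x}\ \prod_{i=1}^{n-1}\xi_{\sigma(i)}^{\,x_i-y_{\sigma(i)}-1}
=\Bigg(\prod_{i=1}^{n-1}\xi_{\sigma(i)}^{\,x-y_{\sigma(i)}-1}\Bigg)\prod_{k=1}^{n-1}\frac{1}{\xi_{\sigma(1)}\cdots\xi_{\sigma(k)}-1},
\]
and analogously for the right block, with denominators $\xi_{\sigma(k)}\cdots\xi_{\sigma(N)}-1$, an overall sign $(-1)^{N-n}$, and some additional monomial factors. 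Hence the $n$-th term of \eqref{1051pm527} becomes a single sum over $\sigma\in\mathcal{S}_N$ of $N$-fold contour integrals, each with integrand $[\mathbf{A}_{\sigma}]_{\nu_n,\nu_N}$ times an explicit rational kernel built from the factors above times $\prod_{i=1}^N\xi_{\sigma(i)}^{\,x-y_{\sigma(i)}-1}e^{(\frac{p}{\xi_{\sigma(i)}}+q\xi_{\sigma(i)}-1)t}$.

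\emph{Residue expansion and the set $S$.} Next I would deform the contours of each of these $N$-fold integrals outward, collecting residues at the loci $\xi_{\sigma(1)}\cdots\xi_{\sigma(k)}=1$ created above (and, if needed, at the poles of $[\mathbf{A}_{\sigma}]_{\nu_n,\nu_N}$), and organize the expansion by the subset $S\subseteq\{1,\dots,N\}$ of integration variables that \emph{survive}, i.e.\ are not evaluated at a residue. Using the explicit description of $[\mathbf{A}_{\sigma}]_{\nu_n,\nu_N}$ from Appendix \ref{351am129} (equivalently \cite{Lee-Raimbekov-2022}), each factor $R_{\beta\alpha}\in\{\,S_{\beta\alpha},\,pT_{\beta\alpha},\,Q_{\beta\alpha},\,0\,\}$, evaluated at such a locus, collapses to a monomial in $p,q$ --- the source of the powers of $q/p$ --- while the factors carrying the surviving variables reorganize, after summing over the relative orderings of those variables, into $\prod_{\alpha<\beta,\ \alpha,\beta\in S}T_{\beta\alpha}$ and $I(\bm{\xi}_S)$ (the factors $(\xi_u-1)^{-1}$ of $J(\bm{\xi}_S)$ being the surviving denominators above, and the numerator $\xi_{u_1}\cdots\xi_{u_{|S|}}-1$ coming from combining the $N\in S$ and $N\notin S$ contributions with the leftover monomials), the remaining $\prod_i\xi_{\sigma(i)}^{\,x-y_{\sigma(i)}-1}e^{(\cdots)t}$ restricting to $W_{t,x,Y_S}(\bm{\xi}_S)$. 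What remains is to add up the scalar prefactor attached to a fixed $S$: the $q/p$-exponent collected along the way should telescope to $\Sigma(S^c)-|S^c|(|S^c|+1)/2$ (the number of pairs $i<j$ with $i\in S$, $j\in S^c$), with an additional shift when $N\notin S$ reflecting whether the initial position $y_N$ of the second-class particle survives; and the sums over the non-surviving positions, themselves geometric, should telescope to the products $\prod_{i=1}^{|S|-1}(q^i-p^i)$, respectively $\prod_{i=1}^{|S|}(q^i-p^i)$, in \eqref{511pm829}, the powers of $1/p$ being tracked throughout. This is exactly $c_S$.

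\emph{The main obstacle.} The genuine difficulty is the bookkeeping in the second step: since each inversion of $\sigma$ contributes one of four possibilities to $[\mathbf{A}_{\sigma}]_{\nu_n,\nu_N}$, by rules depending on both $\sigma$ and $n$, one has to control --- uniformly in $\sigma$ and $n$ --- which of these factors vanish at the residue loci, which survive, and what $p,q$-weight each surviving one carries, and then show the ensuing multiple sum over $(\sigma,n)$ and over residue choices telescopes to the closed form \eqref{511pm829}. I expect the cleanest way to organize this is induction on $N$: peel off the leftmost (slot-$1$) particle, match the recursion already visible in \eqref{511pm829} --- one checks directly that the subsets $S$ with $1\notin S$, after relabelling, reproduce the $(N-1)$-particle coefficients --- and close with the base cases $N=1,2$, which follow immediately from \eqref{510pm829}.
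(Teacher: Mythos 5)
Your overall plan --- geometric summation over configurations, contour deformation collecting residues at $\xi=1$, organization by the subset $S$ of surviving variables, and $q$-binomial identities for the coefficients --- is indeed the skeleton of the paper's argument, and your identification of the exponent $\Sigma(S^c)-|S^c|(|S^c|+1)/2$ as the number of pairs $i<j$ with $i\in S$, $j\in S^c$ is correct. But the step you flag as ``the genuine difficulty'' is exactly where the paper's two essential ideas live, and neither appears in your proposal. First, the paper splits every matrix element via $Q_{\beta\alpha}=S_{\beta\alpha}-pT_{\beta\alpha}$, writing $[\mathbf{A}_\sigma]_{\nu_n,\nu_N}=[\mathbf{A}_\sigma]^+_{\nu_n,\nu_N}+[\mathbf{A}_\sigma]^-_{\nu_n,\nu_N}$ and hence $P_Y=P_Y^++P_Y^-$. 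Second --- and this is the crucial regrouping --- it does \emph{not} evaluate each $n$-term of \eqref{1051pm527} separately; it pairs $\sum_{x_{n+1}=x}P^-_Y(X,\nu_{n+1};t)$ with $\sum_{x_n=x}P^+_Y(X,\nu_n;t)$ (Proposition \ref{1238am829}). The individual $n$-terms do not have the closed form you are aiming for: after the residue expansion each produces integrands involving $J(\bm{\xi}_{S'})$ times the rational factor $\prod_{a\in A^c}\xi_a/\prod_{a\in A^c}(1-\xi_a)$, plus stray additive constants (the ``$1+$'' in \eqref{355am814}), and it is only the cross-$n$ combination, via
\begin{equation*}
I(\bm{\xi}_{S'})\frac{\prod_{a\in A^c}\xi_a}{\prod_{a\in A^c}(1-\xi_a)}+(-1)^{|A^c|}J(\bm{\xi}_{S'})I(\bm{\xi}_{A^c})=(-1)^{|A^c|}I(\bm{\xi}_{S'\cup A^c}),
\end{equation*}
that produces the single kernel $I(\bm{\xi}_S)$ of \eqref{510pm829}. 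Without this pairing, the ``sum over $(\sigma,n)$ and residue choices'' you invoke does not telescope term by term, and the bookkeeping you defer is not merely tedious but unstructured.

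Relatedly, your proposed closing move --- induction on $N$ by peeling off the leftmost particle --- is not what makes the computation close, and you give no evidence that it does. The inductions that actually occur are: (i) Lemma \ref{334am513}, an induction on the number of geometric sums $z_1,\dots,z_{n-1}$ attached to the left block, which peels off the value $l=\sigma(n)$ in the \emph{last} slot of the permutation and relies on Lemma \ref{319pm1312023} to show that the residues at the poles $\xi_n=(\xi_\alpha-p)/q\xi_\alpha$ of $T_{n\alpha}$ vanish upon integration (you mention these poles only parenthetically, but without this vanishing your residue expansion contains many extra terms); and (ii) the final summation over $n=1,\dots,N$ of the coefficients $\tilde c_S$ of Proposition \ref{1238am829}, which is carried out by the Cauchy binomial theorem (Lemma \ref{118am92}), not by an induction on $N$. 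In short, the proposal correctly guesses the shape of the answer and the analytic toolkit, but the combinatorial engine --- the $Q=S-pT$ splitting, the $(n+1,n)$ pairing, and the subset identities of Lemma \ref{517pm818} together with identities (1.9) and (6.3) of \cite{Tracy-Widom-2008} --- is missing, and that engine is the proof.
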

The formula (11) in \cite{Tracy-Widom-2009} by Tracy and Widom for an infinite system with step initial condition was given by the contour integrals with \textit{large} contours. On the other hand, our formula (\ref{510pm829}) which is for a finite system is given by the contour integrals with \textit{small} contours. We did not try to rediscover Tracy and Widom's formula from our result but we believe that it is possible.  Also, to the best of authors' knowledge,  the probability distribution of the second-class particle for \textit{flat initial condition} has not been obtained. It would be interesting to see if our formula can be used to find a formula for \textit{flat initial condition} (see \cite{Lee-2010} for some exact formulas in the single-species ASEP with \textit{flat initial condition}).

A simple but interesting  result can be obtained in the limit $p \to \frac{1}{2}$.
\begin{corollary}\label{525pm829}
In the two-species symmetric exclusion process
\begin{equation}\label{522pm829}
\mathbb{P}(\eta(t) = x) = \dashint_c \xi^{x - y_{N}-1}e^{(\frac{1}{2\xi}+ \frac{\xi}{2} -1)t}d\xi.
\end{equation}
\end{corollary}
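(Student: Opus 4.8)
The natural approach is to take the limit $p \to \tfrac12$ directly in the formula (\ref{510pm829}) of Theorem \ref{102am830} and show that all terms except the single-set term $S = \{N\}$ vanish. First I would examine the coefficients $c_S$ as $p \to \tfrac12$ (so $q \to \tfrac12$ and $q - p \to 0$). In the case $N \in S$, the factor $\prod_{i=1}^{|S|-1}(q^i - p^i)$ vanishes to order $|S|-1$ at $p = \tfrac12$, while $(q/p)^{\,\Sigma(S^c) - |S^c|(|S^c|+1)/2} \to 1$; hence $c_S \to 0$ whenever $|S| \geq 2$, and $c_S \to 1$ when $S = \{N\}$ (empty product). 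In the case $N \notin S$, the factor $\prod_{i=1}^{|S|}(q^i - p^i)$ vanishes to order $|S| \geq 1$, and the prefactor $1/p^{|S|} \to 2^{|S|}$ stays bounded, so again $c_S \to 0$. Thus the only surviving coefficient is $c_{\{N\}} = 1$.

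The one subtlety is that the integrand itself can blow up as $p \to \tfrac12$: the function $T_{\beta\alpha}$ has denominator $p + q\xi_\alpha\xi_\beta - \xi_\alpha$, which at $p = q = \tfrac12$ becomes $\tfrac12(1+\xi_\alpha\xi_\beta) - \xi_\alpha = \tfrac12(1-\xi_\alpha)(1-\xi_\beta) + \tfrac12(\xi_\beta - \xi_\alpha) \cdot 0$... more precisely $\tfrac12(1 - \xi_\alpha)(1-\xi_\beta)$, which is bounded away from zero on the small contours $|\xi| = r$ with $r$ small, so $T_{\beta\alpha}$ stays bounded there; similarly $I(\bm\xi_S)$ and $W_{t,x,Y_S}(\bm\xi_S)$ are continuous in $p$ and bounded uniformly on the (fixed, $p$-independent once $r$ is chosen small) contours. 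Therefore for $|S| \geq 2$ (and for all $S$ with $N \notin S$, including $|S| = 1$) the integral is $O(1)$ while $c_S \to 0$, so each such term contributes $0$ in the limit. For $S = \{N\}$ the product over pairs $\alpha < \beta$ in $S$ is empty (equal to $1$), $I(\bm\xi_{\{N\}}) = (\xi_N - 1)\cdot\frac{1}{\xi_N - 1} = 1$, and $W_{t,x,Y_{\{N\}}}(\bm\xi_{\{N\}}) = \xi_N^{\,x - y_N - 1} e^{(\frac{p}{\xi_N} + q\xi_N - 1)t}$, which at $p = \tfrac12$ is exactly the integrand in (\ref{522pm829}).

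Putting these together: $\mathbb{P}(\eta(t) = x) \to \dashint_c \xi_N^{\,x - y_N - 1} e^{(\frac{1}{2\xi_N} + \frac{\xi_N}{2} - 1)t}\, d\xi_N$, which is (\ref{522pm829}) after renaming $\xi_N$ to $\xi$. To make the interchange of limit and finite sum of integrals rigorous one only needs dominated convergence on each contour, which is immediate since everything is jointly continuous in $(p, \xi)$ on the compact set $[\,\tfrac12 - \delta, \tfrac12 + \delta\,] \times \{|\xi| = r\}^{|S|}$ for small $\delta$; the sum over $S$ is finite so no further justification is needed.

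The main obstacle I anticipate is purely bookkeeping: one must be careful that the radius $r$ of the contour $c$ can indeed be chosen small and uniform in $p$ near $\tfrac12$ so that the integrand of each term remains bounded — i.e. that the zeros of $p + q\xi_\alpha\xi_\beta - \xi_\alpha$ stay away from $\{|\xi_\alpha| = |\xi_\beta| = r\}$ uniformly. This is the same contour-smallness condition already implicit in formula (\ref{533pm813}) and (\ref{510pm829}), so it requires no new idea, only a remark that the condition is stable under small perturbations of $p$. Once that is noted, the computation of the limiting coefficients above finishes the proof.
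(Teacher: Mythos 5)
Your proposal is correct and is essentially the proof the paper intends: the corollary follows by letting $p\to\tfrac12$ in Theorem \ref{102am830}, where every coefficient $c_S$ with $|S|\ge 2$ or $N\notin S$ carries a factor $q-p$ and hence vanishes, leaving only $c_{\{N\}}=1$ with integrand $I(\xi_N)W_{t,x,y_N}(\xi_N)=\xi_N^{x-y_N-1}e^{(\frac{1}{2\xi_N}+\frac{\xi_N}{2}-1)t}$. One trivial slip: at $p=q=\tfrac12$ the denominator of $T_{\beta\alpha}$ equals $\tfrac12(1-\xi_\alpha)(1-\xi_\beta)+\tfrac12(\xi_\beta-\xi_\alpha)$, not $\tfrac12(1-\xi_\alpha)(1-\xi_\beta)$, but your actual justification of boundedness (the denominator tends to $p$ as the contour radius $r\to 0$, uniformly for $p$ near $\tfrac12$) is the right one and suffices.
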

Note that $y_N$  in (\ref{522pm829}) is the initial position of the second-class particle. It is interesting that the formula (\ref{522pm829}) is the same as   the probability distribution of the continuous-time symmetric random walk on $\mathbb{Z}$ when the random walk's initial position is $y_N$. Hence, roughly speaking, our result implies that the second-class particle behaves like the symmetric random walk although it is affected by the first-class particles.

\subsection{Organization of the paper}
In Section \ref{642pm826}, we provide some lemmas for the proof of Theorem \ref{102am830}. In Section \ref{1249am830}, we find the exact formula of the probability distribution for a small system with $N=3$. Section \ref{1249am830} will serve as a warmup for proving the general formula for $N$-particle system. The main idea of the proof for the general formula in Section  \ref{101am830} is based on the techniques in Section \ref{1249am830}. Finally,  we prove our main result, Theorem \ref{102am830}, in Section \ref{101am830}.
\section{Lemmas}\label{642pm826}
In this section, we provide some results which will be used for the proof of our main theorem. First, Lemma \ref{319pm1312023} is a part of the proof of Lemma 3.1 in \cite{Tracy-Widom-2008} and is used to prove Lemma \ref{542pm828}.
\begin{lemma}\cite{Tracy-Widom-2008}\label{319pm1312023}
Suppose that $f(\xi_1,\dots,\xi_n)$ is analytic for all $\xi_i \neq 0$ and that for $i>k$,
\begin{equation*}
f(\xi_1,\dots,\xi_n)\Big|_{\xi_i \to (\xi_k-p)/q\xi_k} = O(\xi_k),
\end{equation*}
as $\xi_k \to 0$, uniformly when all $\xi_j$ with $j \neq k$ are bounded and bounded away from zero. Then,
\begin{equation*}
\dashint_{c}\cdots \dashint_{c}\bigg(\prod_{i<j}\frac{\xi_j - \xi_i}{p+q\xi_i\xi_j - \xi_i}\frac{f(\xi_1,\dots,\xi_n)}{\prod_{i}(1-\xi_i)}\bigg)\bigg|_{\xi_n \to (\xi_k-p)/q\xi_k}d\xi_1\cdots d\xi_k \cdots d\xi_{n-1} =0.
\end{equation*}
\end{lemma}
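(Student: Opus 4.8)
Since the claimed identity is an iterated integral over $\xi_1,\dots,\xi_{n-1}$, each variable running over a fixed small circle $c$ about the origin, it suffices to prove that for every choice of the remaining variables $\{\xi_j:j\neq k\}$ on their circles the inner integral $\dashint_c(\,\cdot\,)\,d\xi_k$ vanishes; the outer integrations then act on the zero function. The uniformity built into the hypothesis guarantees that the estimate $f\big|_{\xi_n\to(\xi_k-p)/(q\xi_k)}=O(\xi_k)$ holds on each such slice, the other $\xi_j$ being bounded and bounded away from $0$. The inner integral will be shown to vanish by Cauchy's theorem, that is, by verifying that its integrand is analytic on and inside $c$.

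First I would make the substitution $\xi_n=\phi(\xi_k)$, with $\phi(\xi_k):=(\xi_k-p)/(q\xi_k)$, explicit. The key algebraic identities, valid on the substitution locus for every $m\neq k$, are
\begin{equation*}
p+q\xi_m\phi(\xi_k)-\xi_m=\frac{p(\xi_k-\xi_m)}{\xi_k},\qquad 1-\phi(\xi_k)=\frac{p(1-\xi_k)}{q\xi_k},
\end{equation*}
together with $p+q\xi_k\xi_n-\xi_k=q\xi_k\bigl(\xi_n-\phi(\xi_k)\bigr)$, which vanishes identically there. The last identity shows that the factor attached to the pair $(k,n)$ becomes singular under a naive substitution, its denominator having a simple zero at $\xi_n=\phi(\xi_k)$; it is to be read through the residue it contributes when the $\xi_n$-contour is swept across that pole, so that it is replaced by $(\phi(\xi_k)-\xi_k)/(q\xi_k)$. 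Equivalently, the assertion of the lemma is that the residue at $\xi_n=\phi(\xi_k)$, integrated over $\xi_1,\dots,\xi_{n-1}$, vanishes. Substituting everywhere and collecting terms --- the factor $1/(q\xi_k)$ so produced cancels the $q\xi_k$ arising from $1/(1-\phi(\xi_k))=q\xi_k/(p(1-\xi_k))$ --- the $\xi_k$-integrand becomes an explicit rational function of $\xi_1,\dots,\xi_{n-1}$ times $f\bigl(\xi_1,\dots,\xi_{n-1},\phi(\xi_k)\bigr)$.

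I would then inspect the $\xi_k$-singularities of this expression inside $c$, of which there are three kinds. (i) For each $m\neq k$ with $m\le n-1$ the substituted $(m,n)$-factor acquires a simple pole at $\xi_k=\xi_m$, its denominator having become $p(\xi_k-\xi_m)/\xi_k$; but the numerator $\pm(\xi_k-\xi_m)$ of the factor indexed by the pair $\{m,k\}$ inside $\prod_{i<j\le n-1}(\xi_j-\xi_i)/(p+q\xi_i\xi_j-\xi_i)$ cancels it exactly, so no singularity at $\xi_k=\xi_m$ survives (one may give the circles slightly unequal radii so that these coincidences do not occur on the circles themselves). (ii) Through $\phi(\xi_k)$ the substitution produces a simple pole at $\xi_k=0$; but the rational part of the integrand has at worst a simple pole there, while $f\big|_{\xi_n\to\phi(\xi_k)}=O(\xi_k)$, so the integrand is bounded near $\xi_k=0$, analytic on a punctured disc about $0$, and hence has a removable singularity at the origin by the Riemann removable singularity theorem. (iii) The only remaining zeros of denominators are $\xi_k=1$ (from $1-\xi_k$), $\xi_k=\phi(\xi_m)$ (from $p+q\xi_m\xi_k-\xi_m$ with $m<k$), and $\xi_k=p/(1-q\xi_m)$ (from $p+q\xi_k\xi_m-\xi_k$ with $m>k$); each of these stays at a distance from $0$ that is bounded below uniformly in the other variables, hence lies outside $c$ once its radius is chosen small enough. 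With (i)--(iii) the $\xi_k$-integrand is analytic on and inside $c$, so the inner integral vanishes, and the lemma follows.

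The principal obstacle is the combination of (i) and (ii): one must run the cancellations carefully enough to be certain that, after all of them, the \emph{only} genuine $\xi_k$-singularity inside $c$ is a simple pole at the origin, and that the hypothesis on $f$ is exactly of the right order to annihilate it --- $O(\xi_k)$ is both necessary and sufficient, whereas mere boundedness of $f$ would leave a genuine simple pole inside $c$. Everything else is routine manipulation with the displayed identities, together with the elementary fact that $p+q\xi_i\xi_j-\xi_i$ does not vanish when the $\xi$'s lie on sufficiently small circles about the origin.
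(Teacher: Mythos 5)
Your argument is correct and is essentially the one the paper relies on: the paper does not reprove this lemma but imports it from the proof of Lemma 3.1 in Tracy--Widom (2008), which takes the same route --- after the substitution $\xi_n\to(\xi_k-p)/(q\xi_k)$ the only candidate singularity of the $\xi_k$-integrand inside the small contour is at $\xi_k=0$, where the residue factor $(\phi(\xi_k)-\xi_k)/(q\xi_k)$ contributes order $-2$, the factor $1/(1-\xi_n)\mapsto q\xi_k/\bigl(p(1-\xi_k)\bigr)$ contributes order $+1$, and the hypothesis $f=O(\xi_k)$ supplies the last power needed to make the singularity removable, so the inner integral vanishes by Cauchy's theorem. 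Your bookkeeping of the cancellation of the apparent poles at $\xi_k=\xi_m$ against the numerators $\pm(\xi_k-\xi_m)$, and of the remaining denominator zeros lying outside a sufficiently small contour, matches that source.
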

If, in particular,
\begin{equation*}
f(\xi_1,\dots,\xi_n) = \frac{1}{(\xi_1,\dots,\xi_n)^z} \times W_{t,x,Y_{\{1,\dots,n\}}}(\xi_1,\dots,\xi_n),~z = 1,2,\dots,
\end{equation*}
then $f(\xi_1,\dots,\xi_n)$ satisfies the hypothesis of Lemma \ref{319pm1312023}. (See Proof of Theorem 3.2 in \cite{Tracy-Widom-2008}.) \\ \\
Now, we provide Lemma \ref{542pm828} which will be used to prove Lemma \ref{334am513}.
\begin{lemma}\label{542pm828}
Let $U = \{u_1,\dots, u_n\}\subset \{1,2,\dots \}$ for $n\geq 2$, $T_{\beta\alpha}$ be given in (\ref{1100am911}),  $A_\sigma$ with  a permutation $\sigma$ on $U$ be given by  (\ref{126am830}), and
\begin{equation}\label{105am528}
c_S = q^{n(n-1)/2}\frac{q^{\Sigma_U(S^c)-n|S^c|}}{p^{\Sigma_U(S^c) - |S^c|(|S^c|+1)/2}}.
\end{equation}
If
\begin{equation}\label{354am814}
\begin{aligned}
&~~\sum_{z_1,\dots, z_{n-1}=1}^{\infty}\dashint_{c}\cdots \dashint_{c}\sum_{\sigma\in \mathcal{S}_U}A_{\sigma}\prod_{i=1}^{n-1}\big(\xi_{\sigma(1)}\cdots \xi_{\sigma(i)}\big)^{-z_i}W_{t,x,Y_U}(\bm{\xi}_U)d\bm{\xi}_U \\
&\hspace{1cm}=~\sum_{S \subset U}~c_S\dashint_{c}\cdots \dashint_{c}\bigg(\prod_{\substack{\alpha<\beta, \\\alpha,\beta \in S}}T_{\beta\alpha}\bigg)I(\bm{\xi}_S)W_{t,x,Y_S}(\bm{\xi}_S) d\bm{\xi}_S
\end{aligned}
\end{equation}
is true for $n$, then
\begin{equation}\label{355am814}
\begin{aligned}
&~~\sum_{z_1,\dots, z_{n}=1}^{\infty}\dashint_{c}\cdots \dashint_{c}\sum_{\sigma \in \mathcal{S}_U}A_{\sigma}\prod_{i=1}^{n}\big(\xi_{\sigma(1)}\cdots \xi_{\sigma(i)}\big)^{-z_i}W_{t,x,Y_U}(\bm{\xi}_U)d\bm{\xi}_U \\
&\hspace{1cm}=~1 + \sum_{S \subset U}~c_S\dashint_{c}\cdots \dashint_{c}\bigg(\prod_{\substack{\alpha<\beta, \\\alpha,\beta \in S}}T_{\beta\alpha}\bigg)J(\bm{\xi}_S)W_{t,x,Y_S}(\bm{\xi}_S) d\bm{\xi}_S
\end{aligned}
\end{equation}
is true for $n$. ($\sum_{S\subset U}$ implies the sum over all nonempty subsets $S \subset U$.)
\begin{proof}
 We will prove for the case of $U = \{1,\dots, n\}$ without loss of generality.
Let us write the left-hand side of (\ref{355am814}) as
\begin{equation}\label{7292pm826}
\sum_{z_n = 1}^{\infty}\bigg(\sum_{z_1,\dots, z_{n-1}=1}^{\infty}\dashint_{c}\cdots \dashint_{c}\sum_{\sigma \in \mathcal{S}_n}A_{\sigma}\prod_{i=1}^{n-1}\big(\xi_{\sigma(1)}\cdots \xi_{\sigma(i)}\big)^{-z_i}W_{t,x-z_n,Y_U}(\bm{\xi}_U)d\bm{\xi}_U\bigg).
\end{equation}
Since  we assume that (\ref{354am814}) is true, (\ref{7292pm826}) becomes
\begin{equation}\label{806pm826}
\sum_{z_n = 1}^{\infty}\sum_{\substack{S\subset \{1,\dots,n\},\\ S\neq \emptyset}}~c_S\dashint_{c}\cdots \dashint_{c}\bigg(\prod_{\substack{\alpha<\beta, \\\alpha,\beta \in S}}T_{\beta\alpha}\bigg)I(\bm{\xi}_S)W_{t,x-z_n,Y_S}(\bm{\xi}_S) d\bm{\xi}_S.
\end{equation}
Note that any nonempty subset of $\{1,\dots, n\}$ is a nonempty subset ${S}'$ of $\{1,\dots, n-1\}$ or ${S}'\cup \{n\}$ or $\{n\}$. Let us consider a pair of integral terms for a set $\emptyset \neq S'=\{s_1,\dots, s_r\}\subset \{1,\dots, n-1\}$ and   $S=S'\cup \{n\}$ in (\ref{806pm826}):
\begin{eqnarray}
& &\sum_{z_n = 1}^{\infty}c_S\dashint_{c}\cdots \dashint_{c}\bigg(\prod_{\substack{\alpha<\beta, \\\alpha,\beta \in S}}T_{\beta\alpha}\bigg)I(\bm{\xi}_S)W_{t,x,Y_S}(\bm{\xi}_S) \big(\xi_{s_1}\cdots \xi_{s_r}\xi_n\big)^{-z_n} d\bm{\xi}_S, \label{100am827}\\
& & \sum_{z_n = 1}^{\infty}c_{S'}\dashint_{c}\cdots \dashint_{c}\bigg(\prod_{\substack{\alpha<\beta, \\\alpha,\beta \in S'}}T_{\beta\alpha}\bigg)I(\bm{\xi}_{S'})W_{t,x,Y_{S'}}(\bm{\xi}_{S'})\big(\xi_{s_1}\cdots \xi_{s_r}\big)^{-z_n} d\bm{\xi}_{S'}. \label{100am8279}
\end{eqnarray}
In (\ref{100am827}), if we enlarge the contour $c$ for the variable $\xi_n$ to a sufficiently large circle $C$ for the convergence of the sum over $z_n$, then we encounter a pole at $\xi_n=1$ that comes from $I(\bm{\xi}_{S})$ and poles $\xi_n = (\xi_{\alpha} - p)/q\xi_{\alpha}$ that come from $T_{n\alpha}$. By Lemma \ref{319pm1312023}, the residue at $\xi_n = (\xi_{\alpha} - p)/q\xi_{\alpha}$ is zero when integrated over $\xi_{\alpha}$.  Hence, (\ref{100am827}) equals
\begin{equation}\label{122am827}
\begin{aligned}
&\sum_{z_n = 1}^{\infty}c_S\dashint_{c}\cdots \dashint_{c}\dashint_{C}\bigg(\prod_{\substack{\alpha<\beta, \\\alpha,\beta \in S}}T_{\beta\alpha}\bigg)I(\bm{\xi}_S)W_{t,x,Y_S}(\bm{\xi}_S)\big(\xi_{s_1}\cdots \xi_{s_r}\xi_n\big)^{-z_n} d\bm{\xi}_S\\
& \hspace{1cm}-\sum_{z_n = 1}^{\infty}c_{S'}\dashint_{c}\cdots \dashint_{c}\bigg(\prod_{\substack{\alpha<\beta, \\\alpha,\beta \in S'}}T_{\beta\alpha}\bigg)I(\bm{\xi}_{S'})W_{t,x,Y_{S'}}(\bm{\xi}_{S'})\big(\xi_{s_1}\cdots \xi_{s_r}\big)^{-z_n} d\bm{\xi}_{S'}
\end{aligned}
\end{equation}
where $c_{S'}$ is due to the facts that
\begin{equation}\label{223am96}
c_S =p^{|S'|}\times c_{S'},\,\frac{1}{p^{|S'|}} = \Big(\prod_{\alpha \in S'}T_{n\alpha}\Big)\Big|_{\xi_n=1}.
\end{equation}
We note that the second term in (\ref{122am827}) can cancel with (\ref{100am8279}). Evaluating the first sum in (\ref{122am827}) and then shrinking $C$ back to $c$, we obtain the sum of  (\ref{100am827}) and (\ref{100am8279}) equal to
\begin{equation*}\label{125am82712}
 c_S\dashint_c\cdots \dashint_{c}\bigg(\prod_{\substack{\alpha<\beta, \\\alpha,\beta \in S}}T_{\beta\alpha}\bigg)J(\bm{\xi}_S)W_{t,x,Y_S}(\bm{\xi}_S) d\bm{\xi}_{S} \,+\,c_{S'}\dashint_c\cdots \dashint_{c}\bigg(\prod_{\substack{\alpha<\beta, \\\alpha,\beta \in S'}}T_{\beta\alpha}\bigg)J(\bm{\xi}_{S'})W_{t,x,Y_{S'}}(\bm{\xi}_{S'}) d\bm{\xi}_{S'}.
\end{equation*}
Finally, the sum of the integral for $S = \{n\}$ over $z_n$ in (\ref{806pm826}) is
\begin{equation*}
 \sum_{z_n=1}^{\infty} c_{\{n\}} \dashint_c \xi_{n}^{-z_n}W_{t,x,y_n}(\xi_n)d\xi_n~ =~1 +  \dashint_cJ(\xi_n)W_{t,x,y_n}(\xi_n)d\xi_n
\end{equation*}
because $W_{t,x,y_n}(1)=1$ and $c_{\{n\}}=1$. Hence, (\ref{806pm826}) equals the right-hand side of (\ref{355am814}).
\end{proof}
\end{lemma}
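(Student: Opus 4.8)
The plan is to prove the inductive step of Lemma~\ref{542pm828} by isolating the extra summation variable $z_n$ and passing the integral in the $\xi_n$-variable from a small contour $c$ to a large contour $C$. First I would rewrite the left-hand side of~(\ref{355am814}) as in~(\ref{7292pm826}), pulling $z_n$ outside and absorbing it into the exponent of $W$ via $W_{t,x-z_n,Y_U}$. The induction hypothesis~(\ref{354am814}) then converts the inner $(n-1)$-fold sum/integral into a sum over nonempty $S\subset\{1,\dots,n\}$ of integrals against $\prod T_{\beta\alpha}\cdot I(\bm{\xi}_S)$, giving~(\ref{806pm826}). At this point I would partition the nonempty subsets of $\{1,\dots,n\}$ into three types: $S'\subset\{1,\dots,n-1\}$, $S'\cup\{n\}$, and $\{n\}$, and treat the paired terms $(\ref{100am827})$ and $(\ref{100am8279})$ together.

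The core of the argument is the contour deformation for the $\xi_n$-variable in~(\ref{100am827}). Since we need $\sum_{z_n}(\xi_{s_1}\cdots\xi_{s_r}\xi_n)^{-z_n}$ to converge, I would enlarge only the $\xi_n$-contour to a large circle $C$. Crossing from $c$ to $C$ picks up residues at $\xi_n=1$ (from the factor $\xi_{s_1}\cdots\xi_{s_r}\xi_n-1$ in $I(\bm{\xi}_S)$) and at $\xi_n=(\xi_\alpha-p)/q\xi_\alpha$ (from each $T_{n\alpha}$). The key input is Lemma~\ref{319pm1312023}: the residues at $\xi_n=(\xi_\alpha-p)/q\xi_\alpha$ vanish upon integration in $\xi_\alpha$, because $f$ built from $I(\bm{\xi}_S)W_{t,x-z_n,Y_S}(\bm{\xi}_S)/\prod(\xi-1)$ (times the monomial in $z_n$) satisfies the required $O(\xi_k)$ hypothesis. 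So only the residue at $\xi_n=1$ survives; evaluating it replaces $I(\bm{\xi}_S)$ by $I(\bm{\xi}_{S'})$ (the $\xi_n-1$ factor cancels one factor of $J$, and $\xi_{s_1}\cdots\xi_{s_r}\xi_n-1\to\xi_{s_1}\cdots\xi_{s_r}-1$ at $\xi_n=1$), it removes the $z_n$-dependence from $\xi_n$, and the remaining $T_{n\alpha}$ factors evaluate via the identity $\prod_{\alpha\in S'}T_{n\alpha}\big|_{\xi_n=1}=1/p^{|S'|}$. Together with $c_S=p^{|S'|}c_{S'}$ from~(\ref{223am96}), this produces exactly the term $-$(\ref{100am8279}), which cancels against~(\ref{100am8279}) itself. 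What is left is the integral over the large contour $C$, which I would now evaluate: summing the geometric series in $z_n$ turns $\big(\xi_{s_1}\cdots\xi_{s_r}\xi_n\big)^{-z_n}$ into $(\xi_{s_1}\cdots\xi_{s_r}\xi_n-1)^{-1}$ up to sign, and this combines with the $\xi_{s_1}\cdots\xi_{s_r}\xi_n-1$ in $I(\bm{\xi}_S)$ to collapse $I(\bm{\xi}_S)$ to $J(\bm{\xi}_S)$; shrinking $C$ back to $c$ is harmless since the new integrand is regular between the contours (the pole at $\xi_n=1$ has been removed). This yields the pair $c_S\int(\prod T)J(\bm{\xi}_S)W + c_{S'}\int(\prod T)J(\bm{\xi}_{S'})W$, i.e.\ the $S=S'\cup\{n\}$ and (half of) the $S'$ contributions on the right-hand side of~(\ref{355am814}).

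Finally I would handle the singleton $S=\{n\}$ separately: $\sum_{z_n\ge1}c_{\{n\}}\dashint_c\xi_n^{-z_n}W_{t,x,y_n}(\xi_n)\,d\xi_n$. Here enlarging to $C$ and summing the series gives $\dashint_C(\xi_n-1)^{-1}W\,d\xi_n$; the residue at $\xi_n=1$ contributes $W_{t,x,y_n}(1)=1$, and shrinking back to $c$ leaves $\dashint_cJ(\xi_n)W_{t,x,y_n}(\xi_n)\,d\xi_n$, using $c_{\{n\}}=1$. Summing the three types of contributions reproduces the right-hand side of~(\ref{355am814}), with the lone ``$1$'' coming precisely from the singleton term, completing the induction step.

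I expect the main obstacle to be the bookkeeping in the contour deformation of~(\ref{100am827}): verifying carefully that the only poles crossed are $\xi_n=1$ and $\xi_n=(\xi_\alpha-p)/q\xi_\alpha$, that the hypothesis of Lemma~\ref{319pm1312023} genuinely applies to the integrand with the monomial weights and the shifted exponent $x-z_n$ present, and that the residue computation at $\xi_n=1$ precisely matches the combinatorial constant identity~(\ref{223am96}) so that the spurious term cancels~(\ref{100am8279}) exactly. Keeping track of which $T_{\beta\alpha}$ factors involve the index $n$ (only those with $\beta=n$, since $n$ is the largest index in $S$) is what makes the $\prod T_{n\alpha}\big|_{\xi_n=1}$ evaluation clean, and that observation is the linchpin of the cancellation.
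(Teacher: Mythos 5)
Your proposal follows essentially the same route as the paper's own proof: isolate $z_n$, invoke the induction hypothesis, split the subsets into $S'$, $S'\cup\{n\}$, and $\{n\}$, deform the $\xi_n$-contour to $C$ using Lemma \ref{319pm1312023} to discard the $T_{n\alpha}$ residues, match the $\xi_n=1$ residue against (\ref{100am8279}) via (\ref{223am96}), and extract the constant $1$ from the singleton term. The only nit is that the pole at $\xi_n=1$ comes from the denominator $\prod(\xi-1)$ in $J(\bm{\xi}_S)$ rather than from the numerator factor $\xi_{s_1}\cdots\xi_{s_r}\xi_n-1$, but your residue computation itself is the correct one.
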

\begin{remark}\label{704pm913}
Recall that $W_{t,x,Y_U}(\bm{\xi}_U)$ is analytic for each variable $\xi_{u_i}$ except at the origin. If  $W_{t,x,Y_U}(\bm{\xi}_U)$ and $W_{t,x,Y_S}(\bm{\xi}_S)$ are replaced with $f(\bm{\xi}_U)$ and $f(\bm{\xi}_U)|_{\xi_i=1,\,i\notin S}$, respectively, for any analytic function $f(\bm{\xi}_U)$  for each variable $\xi_{u_i}$ inside a sufficiently large circle centered at the origin except at the origin, then  the constant 1 on the right-hand side of (\ref{355am814}) is replaced with $f(1,\dots, 1)$. The proofs are almost the same.
\end{remark}
For nonnegative integer $n$, we define
\begin{eqnarray*}
[n] =  \frac{p^n - q^n}{p-q}
\end{eqnarray*}
and
\begin{equation*}
[n]! =
\begin{cases}
 [n][n-1]\cdots [1]&~~~\textrm{if}~n\geq 1, \\[10pt]
1&~~~\textrm{if}~n=0,
\end{cases}
\end{equation*}
\begin{equation}\label{510pm822}
\qbinom{n}{m} =
\begin{cases}
\displaystyle \frac{[n]!}{[n-m]![m]!}&~~\textrm{if}~m\leq n, \\[20pt]
0&~~\textrm{if}~m> n.
\end{cases}
\end{equation}
Let $U = \{u_1,\dots, u_n\}\subset \{1,2,\dots \}$ and $S$ be a nonempty subset of $U$.
\begin{lemma}\label{334am513}
For  $U = \{u_1,\dots, u_n\}\subset \{1,2,\dots \}$, (\ref{354am814}) is true for all $n\geq 2.$
\begin{proof} We will prove by mathematical induction for the case of $U = \{1,\dots, n\}$ without loss of generality.
When $n=2$, a simple manipulation of the contour (that is, enlarging and then shrinking back after summation) shows that
\begin{equation}\label{402am829}
\begin{aligned}
& \sum_{z=1}^{\infty}\dashint_c \dashint_c\xi_1^{-z} W_{t,x,Y_{\{1,2\}}}(\xi_1,\xi_2)d\xi_1d\xi_2 \\
&\hspace{1cm}~=  \dashint_c \dashint_c \frac{1}{\xi_1-1}W_{t,x,Y_{\{1,2\}}}(\xi_1,\xi_2)d\xi_1d\xi_2 + \dashint_c W_{t,x,y_2}(\xi_2)d\xi_2
 \end{aligned}
\end{equation}
and
\begin{equation}\label{403am829}
\begin{aligned}
 & \sum_{z=1}^{\infty}\dashint_c \dashint_c S_{21}\xi_2^{-z}W_{t,x,Y_{\{1,2\}}}(\xi_1,\xi_2)d\xi_1d\xi_2 \\
  &\hspace{1cm}~=  \dashint_c \dashint_c \frac{1}{\xi_2-1}S_{21}W_{t,x,Y_{\{1,2\}}}(\xi_1,\xi_2)d\xi_1d\xi_2 + \frac{q}{p}\,\dashint_c W_{t,x,y_1}(\xi_1)d\xi_1,
 \end{aligned}
\end{equation}
where the single variable integrals in (\ref{402am829}) and (\ref{403am829}) are the residues at $\xi_1=1$ and $\xi_2=1$, respectively,
and the sum of (\ref{402am829}) and (\ref{403am829}) shows that (\ref{354am814}) holds for $n=2$.\\
\\
Suppose that (\ref{354am814}) holds for $n-1$. Let us decompose $\sum_{\sigma \in \mathcal{S}_n}$ in the left-hand side of (\ref{354am814}) as follows:
\begin{equation*}
\sum_{\sigma \in \mathcal{S}_n} (~~\cdots~~)= \sum_{l=1}^n\sum_{\substack{\sigma \in \mathcal{S}_n \\\textrm{with}\, \sigma(n) = l}}(~~\cdots~~).
\end{equation*}
For $l \in \{1,\dots, n\}$ and $\sigma \in \mathcal{S}_n$  with $\sigma(n) = l$, let $\sigma' = \sigma(1)\cdots\sigma(n-1) \in \mathcal{S}_{\{1,\dots,n\}\setminus \{l\}}$.
 Then,
\begin{equation*}
A_{\sigma} = A_{\sigma'}\times \prod_{l<\beta} S_{\beta l}
\end{equation*}
and the left-hand side of (\ref{354am814}) becomes
\begin{equation}\label{1128pm828}
\begin{aligned}
&\sum_{l=1}^n\dashint_c\bigg(\sum_{z_1,\dots, z_{n-1}=1}^{\infty}\dashint_{c}\cdots \dashint_{c}\sum_{\sigma' \in \mathcal{S}_{\{1,\dots,n\}\setminus \{l\}}}A_{\sigma' }\prod_{i=1}^{n-1}\big(\xi_{\sigma(1)}\cdots \xi_{\sigma(i)}\big)^{-z_i}\\
&\hspace{3cm}\times \underbrace{\Big(\prod_{l<\beta} S_{\beta l}\Big)W_{t,x,Y}(\bm{\xi})}_{(\star)}d\xi_{\sigma(1)}\cdots d\xi_{\sigma(n-1)}\bigg)d\xi_{l}.
\end{aligned}
\end{equation}
Note that $(\star)$ is analytic for each variable $\xi_{i}\neq \xi_l$ inside a sufficiently large circle centered at the origin except at the origin.  Hence, by \textit{Remark} \ref{704pm913} which supplements Lemma \ref{542pm828}, (\ref{1128pm828}) becomes
\begin{equation}\label{141am829}
\begin{aligned}
&\sum_{l=1}^n\dashint_c\Bigg(\bigg(\Big(\prod_{l<\beta} S_{\beta l}\Big)W_{t,x,Y}(\bm{\xi})\bigg)\bigg|_{\xi_i = 1,\,i\neq l}  + \sum_{\substack{S' \subset \{1,\dots,n\} \setminus \{l\},\\ S' \neq \emptyset}}c_{S'}\dashint_{c}\cdots \dashint_{c}\Big(\prod_{\substack{\alpha<\beta, \\\alpha,\beta \in S'}}T_{\beta\alpha}\Big)\\
&\hspace{2cm} \times ~J(\bm{\xi}_{S'}) \bigg(\Big(\prod_{l<\beta} S_{\beta l}\Big)W_{t,x,Y}(\bm{\xi})\bigg)\bigg|_{\xi_{i} = 1,\,i \notin S'\cup\{l\}} d\bm{\xi}_{S'}\Bigg)d\xi_l
\end{aligned}
\end{equation}
where
\begin{equation*}
c_{S'} =  q^{(n-1)(n-2)/2}\times \frac{q^{\sum_A(A\setminus S') - (n-1)|A\setminus S'|}}{p^{{\sum_A(A\setminus S') - |A\setminus S'|(|A\setminus S'| +1)/2}}}
\end{equation*}
with $A =\{1,\dots, n\}\setminus \{l\}$.
If  $<l>$ is  the number of elements in $A\setminus S'$ larger than $l$,
\begin{equation*}
\bigg(\Big(\prod_{l<\beta} S_{\beta l}\Big)W_{t,x,Y}(\bm{\xi})\bigg)\bigg|_{\xi_{i} = 1~\textrm{if}~i \notin S'\cup\{l\}} = \Big(\frac{q}{p}\Big)^{<l>}\Big(\prod_{l<\beta \in S'} S_{\beta l}\Big)W_{t,x,Y_{S'\cup\{l\}}}(\bm{\xi}_{S'\cup\{l\}})
\end{equation*}
because $\big(S_{\beta\alpha}\big)\big|_{\xi_\beta = 1} = \frac{q}{p}$. Also, noting that $\sum_A(A\setminus S') = \sum(A\setminus S') - <l>,$ we obtain that
\begin{equation*}
c_{S'} \times (q/p)^{<l>} =  q^{(n-1)(n-2)/2}\times \frac{q^{\sum(A\setminus S') - (n-1)|A\setminus S'|}}{p^{{\sum(A\setminus S') - |A\setminus S'|(|A\setminus S'| +1)/2}}}:= \tilde{c}_{S',l}.
\end{equation*}
Now, we compute
\begin{equation}\label{202am829}
\begin{aligned}
&\sum_{l=1}^n~\sum_{\substack{S' \subset \{1,\dots,n\} \setminus \{l\},\\ S' \neq \emptyset}}\tilde{c}_{S',l}\dashint_{c}\cdots \dashint_{c}\Big(\prod_{\substack{\alpha<\beta, \\\alpha,\beta \in S'}}T_{\beta\alpha}\Big)\Big(\prod_{l<\beta \in S'} S_{\beta l}\Big) J(\bm{\xi}_{S'}) W_{t,x,Y_{S'\cup\{l\}}}(\bm{\xi}_{S'\cup\{l\}}) d\bm{\xi}_{S'\cup \{l\}}.
\end{aligned}
\end{equation}
 Note that if we let $S = S' \cup \{l\}$, then  $\Sigma(A\setminus S') = \Sigma(S^c)$ and $|A\setminus S'| = |S^c|$. This implies that if a pair $(S',l)$ with
 $S' \subset \{1,\dots, n\}\setminus \{l\}$ and a pair $(S'',l')$ with $S'' \subset \{1,\dots, n\}\setminus \{l'\}$ satisfy $S' \cup \{l\} = S'' \cup \{l'\}$, then $\tilde{c}_{S',l} = \tilde{c}_{S'',l'}$. Hence, the sum (\ref{202am829}) is equivalent to
\begin{equation*}\label{203am829}
\begin{aligned}
&\sum_{|S| \geq 2}~\sum_{(S',l)}\tilde{c}_{S}\dashint_{c}\cdots \dashint_{c}\Big(\prod_{\substack{\alpha<\beta, \\\alpha,\beta \in S'}}T_{\beta\alpha}\Big)\Big(\prod_{l<\beta \in S'} S_{\beta l}\Big) J(\bm{\xi}_{S'})  W_{t,x,Y}(\bm{\xi}_{S}) d\bm{\xi}_{S}
\end{aligned}
\end{equation*}
where $\sum_{(S',l)}$ implies the sum over  partitions of $S = S' \cup \{l\}$  and
\begin{equation*}
\tilde{c}_{S}= q^{(n-1)(n-2)/2} \frac{q^{\Sigma(S^c) - (n-1)|S^c|}}{p^{{\Sigma(S^c) - |S^c|(|S^c| +1)/2}}}.
\end{equation*}
Replacing $N$ by $|S|$ and $m$ by $|S|-1$ in the identity (1.9) in \cite{Tracy-Widom-2008}, which is
\begin{equation*}
\sum_{\substack{|A| = m,\\ A\subset \{1,\dots,N\}}}\prod_{\substack{i \in A,\\j \in A^c}}\frac{p+q\xi_i\xi_j - \xi_i}{\xi_j - \xi_i}\cdot \Big(1- \prod_{j \in A^c}\xi_j\Big) = q^m \qbinom{N-1}{m}\Big(1- \prod_{j=1}^N\xi_j\Big),
\end{equation*}
and recalling the form of $T_{\beta\alpha}$ in (\ref{1100am911}),  we obtain
\begin{equation*}
\sum_{(S',\{l\})}\Big(\prod_{\substack{\alpha<\beta, \\\alpha,\beta \in S'}}T_{\beta\alpha}\Big)\Big(\prod_{l<\beta \in S'} S_{\beta l}\Big) J(\bm{\xi}_{S'}) =q^{|S|-1} \Big(\prod_{\substack{\alpha<\beta, \\\alpha,\beta \in S}}T_{\beta\alpha}\Big) I(\bm{\xi}_{S})
\end{equation*}
and a simple calculation shows that
\begin{equation}\label{344pm829}
\tilde{c}_{S} \times q^{|S|-1}  = q^{n(n-1)/2} \frac{q^{\Sigma(S^c) - n|S^c|}}{p^{{\Sigma(S^c) - |S^c|(|S^c| +1)/2}}} = c_S.
\end{equation}
Hence, we showed that  (\ref{202am829}) equals
\begin{equation*}
\sum_{|S| \geq 2} c_S\dashint_{c}\cdots \dashint_{c}  \Big(\prod_{\substack{\alpha<\beta, \\\alpha,\beta \in S}} T_{\beta\alpha}\Big) I(\bm{\xi}_{S}) W_{t,x,Y_S}(\bm{\xi}_{S}) d\bm{\xi}_{S}.
\end{equation*}
Finally,
\begin{equation*}
\sum_{l=1}^n\dashint_c\bigg(\Big(\prod_{l<\beta} S_{\beta l}\Big) W_{t,x,Y}(\bm{\xi})\bigg)\bigg|_{\xi_i = 1,\,i\neq l}d\xi_l  = \sum_{l=1}^n\Big(\frac{q}{p}\Big)^{n-l}\dashint_c W_{t,x,y_l}({\xi}_{y_l})d\xi_l,
\end{equation*}
in (\ref{141am829}), and we note that  $(q/p)^{n-l} =c_S$ when $S = \{l\}$. This completes the proof.
\end{proof}
\end{lemma}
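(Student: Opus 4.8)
The plan is to prove (\ref{354am814}) by induction on $n=|U|$, using Lemma \ref{542pm828}, in the strengthened form of Remark \ref{704pm913}, to pass from the identity on an $(n-1)$-element index set to the identity on $U$. As in the statements above, I reduce to $U=\{1,\dots,n\}$: every quantity in (\ref{354am814}) depends on $U$ only through the order structure of its elements, so no generality is lost.

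For the base case $n=2$ there is a single summation index $z$ and two permutations. The identity permutation contributes $\sum_{z\ge 1}\dashint_c\dashint_c \xi_1^{-z}W_{t,x,Y_{\{1,2\}}}(\xi_1,\xi_2)\,d\xi_1\,d\xi_2$ and the transposition contributes $\sum_{z\ge 1}\dashint_c\dashint_c S_{21}\xi_2^{-z}W_{t,x,Y_{\{1,2\}}}(\xi_1,\xi_2)\,d\xi_1\,d\xi_2$. In each term I enlarge the contour of the variable being summed to a circle of radius $>1$, so that $\sum_{z\ge 1}\xi^{-z}=1/(\xi-1)$ converges, record the residue at $\xi=1$ (in the transposition term this residue carries the factor $q/p$ since $S_{21}|_{\xi_2=1}=q/p$), and shrink back. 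Adding the two outputs, the single-variable residue terms are exactly the $|S|=1$ summands of the right-hand side (with $I(\xi)=1$, $c_{\{1\}}=q/p$, $c_{\{2\}}=1$), while the two remaining double integrals combine through the rational-function identity $\frac{1}{\xi_1-1}+\frac{S_{21}}{\xi_2-1}=q\,T_{21}\,I(\xi_1,\xi_2)$ (a direct computation using $p+q=1$) into the $S=\{1,2\}$ summand with $c_{\{1,2\}}=q$.

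For the inductive step assume (\ref{354am814}) for all $(n-1)$-element sets; by Lemma \ref{542pm828} this yields (\ref{355am814}) on such sets, and by Remark \ref{704pm913} the $(n-1)$-variable identity in the form ``$W$ replaced by an arbitrary function $f$ analytic off the origin inside a large circle, and the isolated constant $1$ replaced by $f(1,\dots,1)$''. In the left-hand side of (\ref{354am814}) for $U=\{1,\dots,n\}$ I split $\sum_{\sigma\in\mathcal S_n}$ by the value $l=\sigma(n)$, set $\sigma'=\sigma(1)\cdots\sigma(n-1)\in\mathcal S_{\{1,\dots,n\}\setminus\{l\}}$, and use $A_\sigma=A_{\sigma'}\prod_{l<\beta}S_{\beta l}$. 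For each fixed $l$ the remaining sum, with $\xi_l$ held as an outer variable, is precisely the left-hand side of (\ref{355am814}) on $\{1,\dots,n\}\setminus\{l\}$ applied to $f=\big(\prod_{l<\beta}S_{\beta l}\big)W_{t,x,Y}(\bm{\xi})$; invoking the induction hypothesis in the Remark's form replaces it by $f|_{\xi_i=1,\,i\ne l}$ plus $\sum_{\emptyset\ne S'\subset\{1,\dots,n\}\setminus\{l\}}c_{S'}\dashint_c\cdots\dashint_c\big(\prod_{\alpha<\beta\in S'}T_{\beta\alpha}\big)J(\bm{\xi}_{S'})\,f|_{\xi_i=1,\,i\notin S'\cup\{l\}}\,d\bm{\xi}_{S'}$. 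Using $S_{\beta l}|_{\xi_\beta=1}=q/p$ to carry out the restrictions introduces extra powers of $q/p$; these modify $c_{S'}$ into coefficients $\tilde c_{S',l}$ that depend on the pair only through $S:=S'\cup\{l\}$. Regrouping the double sum over $(l,S')$ as a sum over $S$ with $|S|\ge 2$, weighted by the partitions $S=S'\cup\{l\}$, reduces everything to evaluating $\sum_{(S',\{l\})}\big(\prod_{\alpha<\beta\in S'}T_{\beta\alpha}\big)\big(\prod_{l<\beta\in S'}S_{\beta l}\big)J(\bm{\xi}_{S'})$; specializing the Tracy--Widom partition identity (1.9) of \cite{Tracy-Widom-2008} to $(|S|,|S|-1)$ collapses this to $q^{|S|-1}\big(\prod_{\alpha<\beta\in S}T_{\beta\alpha}\big)I(\bm{\xi}_S)$, and a short exponent computation gives $\tilde c_S\,q^{|S|-1}=c_S$. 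Finally the leftover ``constant'' contributions $f|_{\xi_i=1,\,i\ne l}$, integrated over $\xi_l$, give $\sum_{l=1}^n(q/p)^{n-l}\dashint_c W_{t,x,y_l}(\xi_l)\,d\xi_l$, which are the $|S|=1$ summands since $(q/p)^{n-l}=c_{\{l\}}$. Assembling the $|S|=1$ and $|S|\ge 2$ pieces gives the right-hand side of (\ref{354am814}) for $n$.

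I expect the analytic steps to be routine: the contour enlargement/shrinkage and geometric summation are standard, and Lemma \ref{319pm1312023} disposes of the spurious residues at $\xi=(\xi_k-p)/q\xi_k$ once and for all. The genuine work is combinatorial: recognizing the inner sum after fixing $\sigma(n)=l$ as an instance of the already-established identity with the factor $\prod_{l<\beta}S_{\beta l}$ absorbed into the test function, verifying that the resulting coefficients are constant along the fibres $\{(S',l):S'\cup\{l\}=S\}$ so that the double sum can be reorganized, and then applying the correct specialization of identity (1.9) to convert the sum over partitions of $\prod T\cdot\prod S\cdot J$ into $\prod T\cdot I$ on all of $S$. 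Lining up the exponents in $c_S$, $c_{S'}$, $\tilde c_{S',l}$ with the $q/p$ corrections is where an error is most likely to creep in, so that is the step I would check most carefully.
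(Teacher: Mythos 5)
Your proposal is correct and follows essentially the same route as the paper's own proof: the same contour enlargement and residue computation for $n=2$, the same decomposition of $\mathcal{S}_n$ by the value $l=\sigma(n)$ with $A_\sigma=A_{\sigma'}\prod_{l<\beta}S_{\beta l}$, the same invocation of Lemma \ref{542pm828} via Remark \ref{704pm913} with the factor $\prod_{l<\beta}S_{\beta l}$ absorbed into the test function, and the same regrouping over $S=S'\cup\{l\}$ followed by the specialization of identity (1.9) of \cite{Tracy-Widom-2008}. The only addition is that you make explicit the rational identity $\frac{1}{\xi_1-1}+\frac{S_{21}}{\xi_2-1}=q\,T_{21}\,I(\xi_1,\xi_2)$ behind the base case, which the paper leaves implicit.
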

\begin{lemma}\label{212am827}
For any $U = \{u_1,\dots, u_n\}\subset \{1,2,\dots \}$, (\ref{355am814}) is true for all $n\geq 2.$
\end{lemma}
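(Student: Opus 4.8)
The plan is simply to chain together the two preceding lemmas, which between them cover exactly the content of the statement. Lemma~\ref{542pm828} is a conditional: \emph{if} the identity~(\ref{354am814}) holds for a given $n$ (equivalently, for a given $U$ with $|U|=n$), \emph{then} the identity~(\ref{355am814}) holds for that same $n$ and $U$. Lemma~\ref{334am513} supplies the hypothesis of that conditional for every $n\ge 2$ and every $U=\{u_1,\dots,u_n\}\subset\{1,2,\dots\}$. So the argument is a one-step deduction: fix $n\ge 2$ and $U$ with $|U|=n$; by Lemma~\ref{334am513} the identity~(\ref{354am814}) is true for this $U$; feeding this into Lemma~\ref{542pm828} yields~(\ref{355am814}) for this $U$; since $n$ and $U$ were arbitrary, we are done.

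The only thing that needs a moment of care is bookkeeping of the constants $c_S$: one must confirm that the $c_S$ appearing in the conclusion of Lemma~\ref{334am513} are literally the same as the ones assumed in the hypothesis of Lemma~\ref{542pm828}. Both are given by the single formula~(\ref{105am528}), namely $c_S = q^{n(n-1)/2}\,q^{\Sigma_U(S^c)-n|S^c|}\,p^{-(\Sigma_U(S^c)-|S^c|(|S^c|+1)/2)}$ with $n=|U|$, so the match is immediate and there is nothing further to verify. Note also that although Lemmas~\ref{542pm828} and~\ref{334am513} were each proved only for $U=\{1,\dots,n\}$ ``without loss of generality,'' their statements are already formulated for an arbitrary $U$, so invoking them for a general $U$ is legitimate.

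Consequently there is no genuine obstacle in this lemma: it is essentially a restatement obtained by composing Lemma~\ref{334am513} with Lemma~\ref{542pm828}, and all the substantive work has already been carried out inside those two proofs — the induction on $n$ that establishes~(\ref{354am814}), and the contour-enlargement/residue computation together with Remark~\ref{704pm913} that converts~(\ref{354am814}) into~(\ref{355am814}). If one preferred a self-contained induction on $n$, the same two ingredients rearrange accordingly: the base case $n=2$ for~(\ref{355am814}) follows from the explicit $n=2$ computation behind Lemma~\ref{334am513} together with Lemma~\ref{542pm828}, and the inductive step follows because Lemma~\ref{334am513}'s induction already propagates~(\ref{354am814}) to the next value of $n$, after which Lemma~\ref{542pm828} upgrades it to~(\ref{355am814}). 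The cleanest presentation, however, is simply to cite Lemmas~\ref{334am513} and~\ref{542pm828} in that order.
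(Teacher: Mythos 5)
Your proposal is correct and is exactly the paper's argument: the paper's proof of this lemma is the one-line observation that Lemma~\ref{542pm828} and Lemma~\ref{334am513} together imply it. Your additional remarks on matching the constants $c_S$ via~(\ref{105am528}) and on the ``without loss of generality'' reduction are accurate but not needed beyond what the paper states.
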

\begin{proof}
It is clear that Lemma \ref{542pm828} and \ref{334am513}  imply Lemma \ref{212am827}.
\end{proof}
\begin{lemma}\label{517pm818}
Let $S\subset \{1,\dots, n\}$ with $|S|=m$. Then,
\begin{equation}\label{518pm830}
\sum_{\substack{S~\textrm{with}\\|S| = m}} \bigg(\prod_{\substack{\alpha<\beta,\\\alpha,\beta \in S^c}} T_{\beta\alpha}\bigg)\bigg(\prod_{\substack{\alpha<\beta,\\\alpha,\beta \in S}} T_{\beta\alpha}\bigg)\bigg( \prod_{\substack{\alpha<\beta,\\\ \alpha\in S, \beta \in S^c}} S_{\beta\alpha}\bigg) = \qbinom{n}{m}\bigg(\prod_{\substack{\alpha<\beta,\\ \alpha,\beta \in \{1,\dots, n\}} }T_{\beta\alpha}\bigg).
\end{equation}
\begin{proof}
Let us recall the forms of $S_{\beta\alpha}$ and $T_{\beta\alpha}$ in (\ref{1100am911}).  Dividing both sides of (\ref{518pm830}) by
\begin{equation*}
\bigg(\prod_{\substack{\alpha<\beta\\ \alpha,\beta \in \{1,\dots, n\}} }T_{\beta\alpha}\bigg),
\end{equation*}
we obtain
\begin{equation}\label{541pm830}
\begin{aligned}
&\sum_{\substack{S~\textrm{with}\\|S| = m}}\frac{\bigg( \displaystyle\prod_{\substack{\alpha<\beta,\\\ \alpha\in S, \beta \in S^c}} S_{\beta\alpha}\bigg)}{\bigg(\displaystyle\prod_{\substack{\alpha<\beta,\\\ \alpha\in S, \beta \in S^c} }T_{\beta\alpha}\bigg)\bigg(\displaystyle\prod_{\substack{\alpha>\beta,\\\ \alpha\in S, \beta \in S^c} }T_{\alpha\beta}\bigg)} =\sum_{\substack{S~\textrm{with}\\|S| = m}}\prod_{\substack{\alpha<\beta,\\ \alpha \in S,\,\beta \in S^c   }}
\bigg(\frac{p+q\xi_{\alpha}\xi_{\beta} - \xi_{\beta}}{\xi_{\alpha}-  \xi_{\beta}} \bigg)\prod_{\substack{\alpha>\beta, \\ \alpha \in S,\,\beta \in S^c  }}
\bigg(\frac{p+q\xi_{\alpha}\xi_{\beta} - \xi_{\beta}}{\xi_{\alpha}-  \xi_{\beta}} \bigg) \\
=~&\sum_{\substack{S^c~\textrm{with}\\|S^c| = n-m}} \displaystyle\prod_{\substack{\alpha \in S,\\\beta \in S^c  }}
\bigg(\frac{p+q\xi_{\alpha}\xi_{\beta} - \xi_{\beta}}{\xi_{\alpha}-  \xi_{\beta}} \bigg)=
 \qbinom{n}{n-m} = \qbinom{n}{m}
 \end{aligned}
\end{equation}
where the third equality follows from  the identity (6.3) in \cite{Tracy-Widom-2008}, which is
\begin{equation*}
\sum_{\substack{A\subset \{1,\dots,N\},\\
|A| = m}}\prod_{\substack{i\in A,\\j \in A^c}}\frac{p+q\xi_i\xi_j - \xi_i}{\xi_j - \xi_i} = \qbinom{N}{m}.
\end{equation*}
\end{proof}
\end{lemma}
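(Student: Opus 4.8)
The plan is to prove the identity (\ref{518pm830}) by reducing it to a known symmetric-function identity of Tracy and Widom, namely identity (6.3) in \cite{Tracy-Widom-2008}. The key observation is that the products $\prod T_{\beta\alpha}$ appearing on both sides share a common ``universal'' factor, so the nontrivial content is entirely about how the factors associated to pairs straddling $S$ and $S^c$ recombine. Concretely, I would first write out $S_{\beta\alpha}$ and $T_{\beta\alpha}$ from (\ref{1100am911}) and note that the product $\prod_{\alpha<\beta,\,\alpha,\beta\in\{1,\dots,n\}}T_{\beta\alpha}$ factors as the product over pairs with both indices in $S$, times the product over pairs with both indices in $S^c$, times the product over ``mixed'' pairs (one index in $S$, one in $S^c$). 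Dividing (\ref{518pm830}) through by this universal factor cancels the first two products on the left against the corresponding pieces of the right, leaving only a sum over $S$ of a ratio built from mixed pairs.

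Next I would simplify that ratio. For a mixed pair, the denominator of $T_{\beta\alpha}$ is $p+q\xi_\alpha\xi_\beta-\xi_\alpha$ and the numerator is $p(\xi_\beta-\xi_\alpha)$; since a mixed pair $\{i,j\}$ with $i\in S$, $j\in S^c$ contributes either as $(\beta,\alpha)=(j,i)$ or as $(\beta,\alpha)=(i,j)$ depending on whether $j>i$ or $j<i$, the two cases need to be handled together. The point is that in the left-hand side, the $S_{\beta\alpha}$ factors only occur for mixed pairs with $\alpha\in S$, $\beta\in S^c$, $\alpha<\beta$, whereas mixed pairs with $\alpha>\beta$ appear only through $T$-factors in the denominator. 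Collecting everything, the summand reduces — as displayed in the chain of equalities in (\ref{541pm830}) — to $\prod_{\alpha\in S,\,\beta\in S^c}\frac{p+q\xi_\alpha\xi_\beta-\xi_\beta}{\xi_\alpha-\xi_\beta}$, where now the product is over all ordered mixed pairs regardless of the sign of $\alpha-\beta$ (the numerator factors $p$ and the sign factors $(-1)$ from the $\alpha>\beta$ terms must be checked to cancel in bulk). Re-indexing by $A=S^c$ with $|A|=n-m$, the sum over $S$ becomes exactly the left-hand side of Tracy–Widom's (6.3), which equals $\qbinom{n}{n-m}$, and this equals $\qbinom{n}{m}$ by the symmetry $[k]!=[k]!$ in the definition (\ref{510pm822}).

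I expect the main obstacle to be the careful bookkeeping in the reduction step: tracking which mixed pairs appear with $\alpha<\beta$ versus $\alpha>\beta$, confirming that the $p$-powers from numerators of $T_{\beta\alpha}$ cancel between numerator and denominator of the ratio, and confirming that the minus signs in $\xi_\beta-\xi_\alpha$ versus $\xi_\alpha-\xi_\beta$ all cancel so that the final product over mixed pairs can be written symmetrically as $\prod_{i\in S,\,j\in S^c}$ without any residual sign. Once the algebra of a single summand is pinned down, the rest is just citing (6.3) of \cite{Tracy-Widom-2008} and invoking the manifest symmetry $\qbinom{n}{n-m}=\qbinom{n}{m}$. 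No contour deformation or analyticity input is needed here — this lemma is purely an algebraic identity among rational functions of $\xi_1,\dots,\xi_n$.
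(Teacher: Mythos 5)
Your proposal is correct and follows essentially the same route as the paper: divide by the universal product $\prod_{\alpha<\beta}T_{\beta\alpha}$, reduce the summand to $\prod_{\alpha\in S,\,\beta\in S^c}\frac{p+q\xi_\alpha\xi_\beta-\xi_\beta}{\xi_\alpha-\xi_\beta}$ over all mixed pairs, and invoke identity (6.3) of \cite{Tracy-Widom-2008} together with $\qbinom{n}{n-m}=\qbinom{n}{m}$. The only point you flag that turns out to be a non-issue is the $p$-power bookkeeping: the lemma uses $T_{\beta\alpha}$ (not $pT_{\beta\alpha}$), so no residual factors of $p$ arise.
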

Let us recall the Cauchy binomial theorem,
\begin{equation}\label{217am830}
\prod_{k=1}^n(1+y\tau^k) = \sum_{k=0}^ny^k\tau^{k(k+1)/2}\qbinom{n}{k}_{\tau}.
\end{equation}
where $\qbinom{n}{k}_{\tau}$ implies the usual q-Binomial Coefficient.
Setting $y=-1$ and $\tau = p/q$ in (\ref{217am830}), we obtain the following identity.
\begin{lemma}\label{118am92}
For each $l=1,2,\dots$,
\begin{equation}\label{459pm822}
\prod_{i=1}^{l-1}(q^i - p^i) = \sum_{k=0}^{l-1}(-1)^k\qbinom{l-1}{k}p^{k(k+1)/2}q^{(l-k)(l-k-1)/2}.
\end{equation}
\end{lemma}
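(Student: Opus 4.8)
The plan is to deduce (\ref{459pm822}) directly from the Cauchy binomial theorem (\ref{217am830}) by the specialization indicated just above the statement, together with a translation between the two normalizations of the $q$-binomial coefficient that appear in the excerpt.

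First I would record the relation between the standard $\tau$-binomial coefficient $\qbinom{n}{k}_{\tau}$ and the coefficient $\qbinom{n}{k}$ defined in (\ref{510pm822}). Writing $[n]_{\tau} = 1 + \tau + \cdots + \tau^{n-1}$ for the $\tau$-analogue of $n$, a one-line computation at $\tau = p/q$ gives $[n]_{p/q} = (q^n - p^n)/(q^{n-1}(q-p)) = [n]/q^{n-1}$, so that $[n]_{p/q}! = [n]!\,/\,q^{n(n-1)/2}$ and hence
\[
\qbinom{n}{k}_{p/q} = q^{\,k(k-1)/2 \,+\, (n-k)(n-k-1)/2 \,-\, n(n-1)/2}\,\qbinom{n}{k}.
\]

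Next, set $n = l-1$, $y = -1$, $\tau = p/q$ in (\ref{217am830}). The left-hand side becomes $\prod_{k=1}^{l-1}\big(1-(p/q)^k\big) = q^{-l(l-1)/2}\prod_{i=1}^{l-1}(q^i - p^i)$, while the right-hand side becomes $\sum_{k=0}^{l-1}(-1)^k(p/q)^{k(k+1)/2}\qbinom{l-1}{k}_{p/q}$. Multiplying both sides by $q^{l(l-1)/2}$ and inserting the conversion formula above, the $k$-th summand acquires the $q$-exponent $l(l-1)/2 - k(k+1)/2 + k(k-1)/2 + (l-1-k)(l-2-k)/2 - (l-1)(l-2)/2$; using $l(l-1)/2 - (l-1)(l-2)/2 = l-1$ and $k(k-1)/2 - k(k+1)/2 = -k$, this collapses to $(l-1-k) + (l-1-k)(l-2-k)/2 = (l-k)(l-k-1)/2$, while the $p$-exponent remains $k(k+1)/2$ and the sign is $(-1)^k$. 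This is precisely (\ref{459pm822}).

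The argument has essentially no obstacle; the only place that requires attention is the bookkeeping of the exponent of $q$ in the passage from $\qbinom{n}{k}_{p/q}$ to $\qbinom{n}{k}$ and its simplification after clearing the $q^{l(l-1)/2}$ factor, which is routine algebra.
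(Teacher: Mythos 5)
Your proof is correct and takes exactly the approach the paper intends: specializing the Cauchy binomial theorem (\ref{217am830}) at $y=-1$, $\tau=p/q$ and converting $\qbinom{l-1}{k}_{p/q}$ to the normalization $\qbinom{l-1}{k}$ of (\ref{510pm822}). Your exponent bookkeeping checks out; you have merely supplied the routine algebra that the paper leaves implicit.
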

\section{Formula for $N=3$}\label{1249am830}
In this section, we find $\mathbb{P}(\eta(t) = x)$ for $N=3$. The methods used in the section are further generalized in the next section for general $N$-particle systems.
\subsection{Decomposition of the transition probability}
For $N=3$, it is straightforward that
\begin{equation}\label{210am527}
 \mathbb{P}(\eta(t) = x) = \sum_{\substack{X\,\textrm{with} \\x_3 = x}} P_{Y}(X,\nu_3;t) +  \sum_{\substack{X\,\textrm{with} \\x_2 = x}} P_{Y}(X,\nu_2;t) + \sum_{\substack{X\,\textrm{with} \\x_1 = x}} P_{Y}(X,\nu_1;t),
\end{equation}
and
\begin{equation}\label{722pm812}
P_{Y}(X,\nu_n;t)= \sum_{\sigma\in \mathcal{S}_3}~\dashint_c \dashint_c \dashint_c~[\mathbf{A}_{\sigma}]_{\nu_n,\,221}\prod_{i=1}^3\Big(\xi_{\sigma(i)}^{x_{i}-y_{\sigma(i)}-1}e^{(\frac{p}{\xi_i}+q\xi_i - 1 )t}\Big)~ d\xi_1d\xi_2 d\xi_3
\end{equation}
where the matrix elements  $[\mathbf{A}_{\sigma}]_{\nu_n,\,221}$ are given as in Table \ref{fig:figure1}, which can be obtained by the formulas of $[\mathbf{A}_{\sigma}]_{\nu_n,\,\nu_N}$ in \cite{Lee-Raimbekov-2022}.
\begin{table}[H]
\[\begin{tabular}{ | c || c || c ||  c |}
\hline
  $\sigma$  &$[\mathbf{A}_{\sigma}]_{221,221}$       & $[\mathbf{A}_{\sigma}]_{212,221}$      & $[\mathbf{A}_{\sigma}]_{122,221}$      \\
\hline
123 & 1                     &           0            &       0                \\

213 & $S_{21}$              &         0              &        0               \\

132 & $Q_{32}$              & $pT_{32}$             &          0             \\

231 & $S_{21} Q_{31}$        & $pT_{31} S_{21}$       &        0               \\

312 & $S_{31} Q_{32}$        & $pT_{32} Q_{31}$       & $pT_{31} pT_{32}$      \\

321 & $S_{21} S_{32} Q_{31}$  & $pT_{31} S_{21} Q_{32}$ & $pT_{32} pT_{31} S_{21}$\\
\hline

\end{tabular}\]
 \caption{$[\mathbf{A}_{\sigma}]_{\nu_n,221}$ for $N=3$}
    \label{fig:figure1}
\end{table}
Using that
\begin{equation}\label{1119pm527}
Q_{\beta\alpha} = S_{\beta\alpha}  - pT_{\beta\alpha},
\end{equation}
we write $[\mathbf{A}_{\sigma}]_{\nu_n,\,221}$ as
\begin{equation*}
[\mathbf{A}_{\sigma}]_{\nu_n,\,221} = [\mathbf{A}_{\sigma}]_{\nu_n,\,221}^+ + [\mathbf{A}_{\sigma}]_{\nu_n,\,221}^-
\end{equation*}
where $[\mathbf{A}_{\sigma}]_{\nu_n,\,221}^+$ and $[\mathbf{A}_{\sigma}]_{\nu_n,\,221}^-$ are given in Table \ref{table2} below.
 \begin{table}[H]
    \[\begin{tabular}{|c||c|c||c|c||c|}
    \hline
     $\sigma$ & $[\mathbf{A}_{\sigma}]^{+}_{221,221}$ & $[\mathbf{A}_{\sigma}]^{-}_{221,221}$ & $[\mathbf{A}_{\sigma}]^{+}_{212,221}$ & $[\mathbf{A}_{\sigma}]^{-}_{212,221}$ & $[\mathbf{A}_{\sigma}]_{122,221}$    \\ \hline
    $123$ & $1$ & 0 &0 & 0 & 0  \\
    $213$ & $S_{21}$ & 0 &0 & 0&0   \\
    $132$ & $S_{32}$ & $-pT_{32}$ & $pT_{32}$ &0 &  0  \\
    $231$ & $S_{21}S_{31}$ & $-pT_{31}S_{21}$ & $pT_{31}S_{21}$ & 0 & 0  \\
    $312$ & $S_{31}S_{32}$ & $-pT_{32}S_{31}$ & $pT_{32}S_{31}$ & $-pT_{31}pT_{32}$ & $pT_{31}pT_{32}$ \\
    $321$ & $S_{21}S_{32}S_{31}$ & $-pT_{31}S_{21}S_{32}$ & $pT_{31}S_{21}S_{32}$ & $-pT_{32}pT_{31}S_{21}$ & $pT_{32}pT_{31}S_{21}$  \\
    \hline
    \end{tabular}\]
    \caption{$[\mathbf{A}_{\sigma}]^{\pm}_{\nu_n,221}$ for $N=3$}
    \label{table2}
\end{table}
Following the convention above, we decompose the transition probabilities $P_{Y}(X,\nu_n;t)$ as follows:
 \begin{equation*}
 P_{Y}(X,\nu_n;t) =  P_{Y}^+(X,\nu_n;t) + P_{Y}^-(X,\nu_n;t)
 \end{equation*} where $P_{Y}^+(X,\nu_n;t)$ and $P_{Y}^-(X,\nu_n;t)$ are the \textit{components} of $P_{Y}(X,\nu_n;t)$ obtained by replacing $[\mathbf{A}_{\sigma}]_{\nu_n,\,221}$ by $[\mathbf{A}_{\sigma}]_{\nu_n,\,221}^+$ and $[\mathbf{A}_{\sigma}]_{\nu_n,\,221}^-$, respectively, in (\ref{722pm812}). With these settings,  we will compute the sum (\ref{210am527}) in the following manner:
 \begin{equation}\label{21011am527}
 \begin{aligned}
 \mathbb{P}(\eta(t) = x) =~& \underbrace{\sum_{\substack{X\,\textrm{with} \\x_3 = x}} P_{Y}^+(X,\nu_3;t)}_{(a)} + \underbrace{\bigg(\sum_{\substack{X\,\textrm{with} \\x_3 = x}} P_{Y}^-(X,\nu_3;t) +  \sum_{\substack{X\,\textrm{with} \\x_2 = x}} P_{Y}^+(X,\nu_2;t)\bigg)}_{(b)} \\[10pt]
 &\hspace{1cm} +~ \underbrace{\bigg( \sum_{\substack{X\,\textrm{with} \\x_2 = x}} P_{Y}^-(X,\nu_2;t)+ \sum_{\substack{X\,\textrm{with} \\x_1 = x}} P_{Y}(X,\nu_1;t)\bigg)}_{(c)}.
 \end{aligned}
\end{equation}
\subsection{Computation of (a)  in (\ref{21011am527})}
We observe that $[\mathbf{A}_{\sigma}]_{221,221}^+$ is the same as $A_{\sigma}$ in (\ref{126am830}) for the single-species ASEP. Hence, the sum $(a)$ in (\ref{21011am527}) is the same as the probability distribution of the rightmost particle's position in the single-species ASEP. Using Theorem 5.1 with $N=m=3$ in \cite{Tracy-Widom-2008}, we obtain (a) equal to
\begin{equation}\label{1251am528}
\begin{aligned}
&  q^3 \dashint_{c}\dashint_{c}\dashint_{c}T_{32}T_{31}T_{21}I(\xi_1,\xi_2,\xi_3)W_{t,x,y_1,y_2,y_3}(\xi_{1},\xi_2, \xi_{3})d\xi_{1}\xi_2 d\xi_{3} \\[8pt]
&~~+ \frac{q^3}{p^2} \,\dashint_{c}\dashint_{c}T_{21}I(\xi_1,\xi_2)W_{t,x,y_1,y_2}(\xi_{1},\xi_2)d\xi_{1}\xi_2\\[8pt]
&~~+ \frac{q^2}{p}\, \dashint_{c}\dashint_{c}T_{31}I(\xi_1,\xi_3)W_{t,x,y_1,y_3}(\xi_{1},\xi_3)d\xi_{1}\xi_3 \\[8pt]
&~~+ {q}\, \dashint_{c}\dashint_{c}T_{32}I(\xi_2,\xi_3)W_{t,x,y_2,y_3}(\xi_{2},\xi_3)d\xi_{2}\xi_3 \\[8pt]
&~~+ \frac{q^2}{p^2}\, \dashint_{c}W_{t,x,y_1}(\xi_{1})d\xi_{1}~ +~\frac{q}{p}\, \dashint_{C_{r}}W_{t,x,y_2}(\xi_{2})d\xi_{2} ~ + ~ \dashint_{c}W_{t,x,y_3}(\xi_{3})d\xi_{3}.
\end{aligned}
\end{equation}
\subsection{Computation of (b) in (\ref{21011am527})}
Recall that $P_{Y}^-(X,\nu_3;t)$ and $P_{Y}^+(X,\nu_2;t)$ are in the form of
\begin{equation*}
\sum_{\sigma \in \mathcal{S}_3}~\dashint_c\dashint_c\dashint_c ~(~\cdots~).
\end{equation*}
We decompose $\sum_{\sigma \in \mathcal{S}_3}$ as in
\begin{equation*}
\sum_{\sigma \in \mathcal{S}_3}(~\cdots~)= \sum_{l=1}^3\sum_{\substack{\sigma\,\textrm{with}\\
\sigma(3)=l}}(~\cdots~)
\end{equation*}
and evaluate the sum (b) in the following manner:
\begin{equation*}
\begin{aligned}
(b)~=~&\sum_{l=1}^3\bigg(\underbrace{\sum_{\substack{\sigma\,\textrm{with}\\
\sigma(3)=l}}\sum_{\substack{X\,\textrm{with} \\x_3 = x}}\dashint_c\dashint_c\dashint_c [\mathbf{A}_{\sigma}]_{221,\,221}^- \prod_{i=1}^3\Big(\xi_{\sigma(i)}^{x_i - y_{\sigma(i)}-1} e^{(\frac{p}{\xi_i}+q\xi_i - 1)t}\Big)d\xi_1d\xi_2d\xi_3 }_{(*)}\\
&\hspace{1cm}+~
\underbrace{\sum_{\substack{\sigma\,\textrm{with}\\
\sigma(3)=l}}\sum_{\substack{X\,\textrm{with} \\x_2 = x}}\dashint_c\dashint_c\dashint_c [\mathbf{A}_{\sigma}]_{212,\,221}^+ \prod_{i=1}^3\Big(\xi_{\sigma(i)}^{x_i - y_{\sigma(i)}-1} e^{(\frac{p}{\xi_i}+q\xi_i - 1)t}\Big)d\xi_1d\xi_2d\xi_3}_{(**)}\bigg).
\end{aligned}
\end{equation*}
Note that when $l=3$, both $(*)$ and $(**)$ are zero because $[\mathbf{A}_{\sigma}]_{221,221}^-=[\mathbf{A}_{\sigma}]_{212,221}^+=0$ in this case (see Table \ref{table2}).  When $l=2$, $(*)$ can be written
\begin{equation}\label{1251am5289}
\begin{aligned}
&-\sum_{z_1,z_2=1}^{\infty}\dashint_c\bigg(\,\dashint_c\dashint_c \Big(\big(\xi_1\xi_3\big)^{-z_1}\big(\xi_1\big)^{-z_2}+S_{31}\big(\xi_3\xi_1\big)^{-z_1}\big(\xi_3\big)^{-z_2}\Big)pT_{32} \\
&\hspace{4cm}\times W_{t,x,y_1,y_2,y_3}(\xi_1,\xi_2,\xi_3)d\xi_1d\xi_3\bigg)d\xi_2.
\end{aligned}
\end{equation}
If we apply  Lemma \ref{212am827} with $U=\{1,3\}$ and \textit{Remark} \ref{704pm913} to the sum of double integral with respect to $\xi_1,\xi_3$ in (\ref{1251am5289}), then (\ref{1251am5289}) equals
\begin{equation}\label{1251am5281}
\begin{aligned}
& - qp\, \dashint_{c}\dashint_{c}\dashint_{c}T_{31}T_{32}J(\xi_1,\xi_3)W_{t,x,y_1,y_2,y_3}(\xi_{1},\xi_2, \xi_{3})d\xi_{1}\xi_2 d\xi_{3} \\[8pt]
&- \frac{q}{p} \,\dashint_{c}\dashint_{c}J(\xi_1)W_{t,x,y_1,y_2}(\xi_{1},\xi_2)d\xi_{1}\xi_2
~-p\dashint_{c}\dashint_{c}T_{32}J(\xi_3)W_{t,x,y_2,y_3}(\xi_{2},\xi_3)d\xi_{2}\xi_3 \\[8pt]
& - \dashint_{c}W_{t,x,y_2}(\xi_{2})d\xi_{2}.
\end{aligned}
\end{equation}
When $l=2$, $(**)$ equals
\begin{equation}\label{1251am52899}
\begin{aligned}
&~\sum_{\substack{\sigma\,\textrm{with}\\
\sigma(3)=2}}\sum_{z,v=1}^{\infty}\dashint_c\dashint_c\dashint_c [\mathbf{A}_{\sigma}]_{212,\,221}^+ \xi_{\sigma(1)}^{-z}\xi_{\sigma(3)}^{v}W_{t,x,y_1,y_2,y_3}(\xi_1,\xi_2,\xi_3)d\xi_1d\xi_2d\xi_3 \\
=&~\sum_{\substack{\sigma\,\textrm{with}\\
\sigma(3)=2}}\sum_{z=1}^{\infty}\dashint_c\dashint_c\dashint_c [\mathbf{A}_{\sigma}]_{212,\,221}^+ \xi_{\sigma(1)}^{-z}\frac{\xi_2}{1-\xi_2}W_{t,x,y_1,y_2,y_3}(\xi_1,\xi_2,\xi_3)d\xi_1d\xi_2d\xi_3\\
=& ~\sum_{z=1}^{\infty}\,\dashint_c\bigg(\,\dashint_c\dashint_c \Big(\big(\xi_1\big)^{-z}+S_{31}\big(\xi_3\big)^{-z}\Big)pT_{32}W_{t,x,y_1,y_2,y_3}(\xi_1,\xi_2,\xi_3)d\xi_1d\xi_3\bigg)\frac{\xi_2}{1-\xi_2}d\xi_2.
\end{aligned}
\end{equation}
If we apply Lemma \ref{334am513} with $U=\{1,3\}$ and \textit{Remark} \ref{704pm913} to the sum of the double integral with respect to $\xi_1,\xi_3$ in (\ref{1251am52899}), then (\ref{1251am52899}) equals
\begin{equation}\label{am5282}
\begin{aligned}
&  qp\, \dashint_{c}\dashint_{c}\dashint_{c}T_{31}T_{32}I(\xi_1,\xi_3)\frac{\xi_2}{1-\xi_2}W_{t,x,y_1,y_2,y_3}(\xi_{1},\xi_2, \xi_{3})d\xi_{1}\xi_2 d\xi_{3} \\[8pt]
&+ \frac{q}{p} \,\dashint_{c}\dashint_{c}\frac{\xi_2}{1-\xi_2}W_{t,x,y_1,y_2}(\xi_{1},\xi_2)d\xi_{1}\xi_2~+p~ \dashint_{c}\dashint_{c}T_{32}\frac{\xi_2}{1-\xi_2}W_{t,x,y_2,y_3}(\xi_{2},\xi_3)d\xi_{2}\xi_3.
\end{aligned}
\end{equation}
Summing (\ref{1251am5281}) and (\ref{am5282}), we obtain
\begin{equation}\label{1251am5284}
\begin{aligned}
& - qp\, \dashint_{c}\dashint_{c}\dashint_{c}T_{31}T_{32}I(\xi_1,\xi_2,\xi_3)W_{t,x,y_1,y_2,y_3}(\xi_{1},\xi_2, \xi_{3})d\xi_{1}\xi_2 d\xi_{3} \\[8pt]
&- \frac{q}{p} \,\dashint_{c}\dashint_{c}I(\xi_1,\xi_2)W_{t,x,y_1,y_2}(\xi_{1},\xi_2)d\xi_{1}\xi_2\\[8pt]
&-p\, \dashint_{c}\dashint_{c}T_{32}I(\xi_2,\xi_3)W_{t,x,y_2,y_3}(\xi_{2},\xi_3)d\xi_{2}\xi_3 ~ - \dashint_{c}W_{t,x,y_2}(\xi_{2})d\xi_{2}.
\end{aligned}
\end{equation}
Similarly, when $\sigma(3) =1$, we obtain
\begin{equation}\label{608am5284}
\begin{aligned}
 (*) + (**)~~ = &~ -~ qp\, \dashint_{c}\dashint_{c}\dashint_{c}T_{31}T_{32}S_{21}I(\xi_1,\xi_2,\xi_3)W_{t,x,y_1,y_2,y_3}(\xi_{1},\xi_2, \xi_{3})d\xi_{1}\xi_2 d\xi_{3} \\[8pt]
&~-~ \frac{q}{p} \,\dashint_{c}\dashint_{c}S_{21}I(\xi_1,\xi_2)W_{t,x,y_1,y_2}(\xi_{1},\xi_2)d\xi_{1}\xi_2\\[8pt]
&~-~q\, \dashint_{c}\dashint_{c}T_{31}I(\xi_1,\xi_3)W_{t,x,y_1,y_3}(\xi_{1},\xi_3)d\xi_{1}\xi_3 ~-~ \frac{q}{p}\dashint_{c}W_{t,x,y_1}(\xi_{1})d\xi_{1}.
\end{aligned}
\end{equation}
Finally, using the fact that
\begin{equation*}
1+S_{\beta\alpha} = T_{\beta\alpha},
\end{equation*}
we find that the sum of (\ref{1251am5284}) and (\ref{608am5284}), that is,  $(b)$ equals
\begin{equation}\label{608am52844}
\begin{aligned}
& - qp\, \dashint_{c}\dashint_{c}\dashint_{c}T_{31}T_{32}T_{21}I(\xi_1,\xi_2,\xi_3)W_{t,x,y_1,y_2,y_3}(\xi_{1},\xi_2, \xi_{3})d\xi_{1}\xi_2 d\xi_{3} \\[6pt]
&- \frac{q}{p} \,\dashint_{c}\dashint_{c}T_{21}I(\xi_1,\xi_2)W_{t,x,y_1,y_2}(\xi_{1},\xi_2)d\xi_{1}\xi_2  -q\, \dashint_{c}\dashint_{c}T_{31}I(\xi_1,\xi_3)W_{t,x,y_1,y_3}(\xi_{1},\xi_3)d\xi_{1}\xi_3 \\[6pt]
&-p\, \dashint_{c}\dashint_{c}T_{32}I(\xi_2,\xi_3)W_{t,x,y_2,y_3}(\xi_{2},\xi_3)d\xi_{2}\xi_3 \\[6pt]
& - \frac{q}{p}\,\dashint_{c}I(\xi_1)W_{t,x,y_1}(\xi_{1})d\xi_{1}  -~\dashint_{c}I(\xi_2)W_{t,x,y_2}(\xi_{2})d\xi_{2}.
\end{aligned}
\end{equation}
\subsection{Computation of $(c)$ in (\ref{21011am527})}
It can be easily obtained that the first sum in (c) equals
\begin{equation*}
\begin{aligned}
~& p^3\dashint_c\dashint_c\dashint_c T_{21}T_{31}T_{32}\frac{\xi_1\xi_2-1}{(\xi_1-1)(\xi_2-1)(\xi_3-1)}W_{t,x,y_1,y_2,y_3}(\xi_1,\xi_2,\xi_3)d\xi_1d\xi_2d\xi_3 \\[5pt]
&~~~+~p\dashint_c\dashint_c T_{21}I(\xi_1,\xi_2)W_{t,x,y_1,y_2}(\xi_1,\xi_2)d\xi_1d\xi_2
\end{aligned}
\end{equation*}
and the second sum in (c) equals
\begin{equation*}
p^3\dashint\dashint\dashint T_{21}T_{31}T_{32}\frac{\xi_1\xi_2}{(1-\xi_1)(1-\xi_2)}W_{t,x,y_1,y_2,y_3}(\xi_1,\xi_2,\xi_3)d\xi_1d\xi_2d\xi_3.
\end{equation*}
Summing these two results, we can see that $(c)$ is equal to
\begin{equation}\label{242am86}
\begin{aligned}
&p^3\dashint_c\dashint_c\dashint_c T_{21}T_{31}T_{32}I(\xi_1,\xi_2,\xi_3)W_{t,x,y_1,y_2,y_3}(\xi_1,\xi_2,\xi_3)d\xi_1d\xi_2d\xi_3 \\[5pt]
&~~~+~p\dashint_c\dashint_c T_{21}I(\xi_1,\xi_2)W_{t,x,y_1,y_2}(\xi_1,\xi_2)d\xi_1d\xi_2.
\end{aligned}
\end{equation}
\subsection{$(a) + (b) +(c)$ in (\ref{21011am527})}
We observe that the integrals of the same variables in (\ref{1251am528}), (\ref{608am52844}) and (\ref{242am86})  have the same integrand, so their sum will be the integral multiplied by the sum of coefficients of those integrals. Moreover, the sum of these coefficients  can be factorized. The final form of $(a) + (b) +(c)$ is as follows:
\begin{equation}\label{304am86}
\begin{aligned}
& \mathbb{P}(\eta(t) = x) = (q-p)^2 \dashint_{c}\dashint_{c}\dashint_{c}I(\xi_1,\xi_2,\xi_3)W_{t,x,y_1,y_2,y_3}(\xi_{1},\xi_2, \xi_{3})d\xi_{1}\xi_2 d\xi_{3} \\[8pt]
&+ \frac{(q-p)^2}{p^2} \,\dashint_{c}\dashint_{c}I(\xi_1,\xi_2)W_{t,x,y_1,y_2}(\xi_{1},\xi_2)d\xi_{1}\xi_2+ \frac{q}{p}(q-p)\, \dashint_{c}\dashint_{c}I(\xi_1,\xi_3)W_{t,x,y_1,y_3}(\xi_{1},\xi_3)d\xi_{1}\xi_3 \\[8pt]
&+ {(q-p)}\, \dashint_{c}\dashint_{c}I(\xi_2,\xi_3)W_{t,x,y_2,y_3}(\xi_{2},\xi_3)d\xi_{2}d\xi_3 + \frac{q}{p^2}(q-p)\, \dashint_{c}I(\xi_1)W_{t,x,y_1}(\xi_{1})d\xi_{1}\\[8pt]
&+\frac{(q-p)}{p}\, \dashint_{c}I(\xi_2)W_{t,x,y_2}(\xi_{2})d\xi_{2}+ \dashint_{c}I(\xi_3)W_{t,x,y_3}(\xi_{3})d\xi_{3}.
\end{aligned}
\end{equation}
\section{Proof of Theorem \ref{102am830}}\label{101am830}
In this section, we extend the idea used in Section \ref{1249am830} to prove Theorem \ref{102am830}. For $[\mathbf{A}_{\sigma}]_{\nu_n,\,\nu_N}$ containing $Q_{\beta\alpha}$, let denote such $[\mathbf{A}_{\sigma}]_{\nu_n,\,\nu_N}$ by $[\mathbf{A}_{\sigma}]_{\nu_n,\,\nu_N}^+$ if $Q_{\beta\alpha}$ is replaced by $S_{\beta\alpha}$, and by $[\mathbf{A}_{\sigma}]_{\nu_n,\,\nu_N}^-$ if $Q_{\beta\alpha}$ is replaced by $-pT_{\beta\alpha}$. For $[\mathbf{A}_{\sigma}]_{\nu_n,\,\nu_N}$ containing no $Q_{\beta\alpha}$, we set $[\mathbf{A}_{\sigma}]_{\nu_n,\,\nu_N} = [\mathbf{A}_{\sigma}]^+_{\nu_n,\,\nu_N}$ and $[\mathbf{A}_{\sigma}]^-_{\nu_n,\,\nu_N} = 0$. Then, we have an expression
\begin{equation*}
[\mathbf{A}_{\sigma}]_{\nu_n,\,\nu_N} = [\mathbf{A}_{\sigma}]_{\nu_n,\,\nu_N}^+ + [\mathbf{A}_{\sigma}]_{\nu_n,\,\nu_N}^-
\end{equation*}
because $Q_{\beta\alpha} = S_{\beta\alpha} - pT_{\beta\alpha}$. See Table \ref{fig:figure1} and Table \ref{table2} for $N=3$, for example. Following the convention above, we decompose the transition probabilities $P_{Y}(X,\nu_n;t)$ as follows:
 \begin{equation*}
 P_{Y}(X,\nu_n;t) =  P_{Y}^+(X,\nu_n;t) + P_{Y}^-(X,\nu_n;t)
 \end{equation*} where $P_{Y}^+(X,\nu_n;t)$ and $P_{Y}^-(X,\nu_n;t)$ are the \textit{components} of $P_{Y}(X,\nu_n;t)$ obtained by replacing $[\mathbf{A}_{\sigma}]_{\nu_n,\,\nu_N}$ by $[\mathbf{A}_{\sigma}]_{\nu_n,\,\nu_N}^+$ and $[\mathbf{A}_{\sigma}]_{\nu_n,\,\nu_N}^-$, respectively, in (\ref{533pm813}). Hence, (\ref{1051pm527}) is written as
\begin{equation}\label{415am87}
 \begin{aligned}
 \mathbb{P}(\eta(t) = x) =~& \sum_{\substack{X\,\textrm{with} \\x_N = x}} P_{Y}^+(X,\nu_N;t) + \bigg(\sum_{\substack{X\,\textrm{with} \\x_N = x}} P_{Y}^-(X,\nu_N;t) +  \sum_{\substack{X\,\textrm{with} \\x_{N-1} = x}} P_{Y}^+(X,\nu_{N-1};t)\bigg) \\[10pt]
 &\hspace{1cm} +~ \cdots ~+~\bigg( \sum_{\substack{X\,\textrm{with} \\x_2 = x}} P_{Y}^-(X,\nu_2;t)+ \sum_{\substack{X\,\textrm{with} \\x_1 = x}} P_{Y}(X,\nu_1;t)\bigg).
 \end{aligned}
\end{equation}
We will find the general formula of
\begin{equation}\label{751pm87}
\bigg(\sum_{\substack{X\,\textrm{with} \\x_{n+1} = x}} P_{Y}^-(X,\nu_{n+1};t) +  \sum_{\substack{X\,\textrm{with} \\x_{n} = x}} P_{Y}^+(X,\nu_{n};t)\bigg)
\end{equation}
for $n=1,\dots, N$ with conventions $P_{Y}^-(X,\nu_{N+1};t) =0$ and $ P_{Y}(X,\nu_1;t) =  P_{Y}^+(X,\nu_1;t)$, and then sum the result over $n=1,\dots,N$.
\begin{proposition}\label{1238am829}
For $n=1,\dots, N$,
\begin{equation}\label{331am814}
\begin{aligned}
&\sum_{\substack{X\,\textrm{with} \\x_{n+1} = x}} P_{Y}^-(X,\nu_{n+1};t) +  \sum_{\substack{X\,\textrm{with} \\x_{n} = x}} P_{Y}^+(X,\nu_{n};t) = \\
&\hspace{2.5cm}\sum_{\substack{\emptyset \neq S \subset \{1,\dots, N\},\\
|S|\geq N-n}}\tilde{c}_S\dashint_c\cdots \dashint_c \bigg(\prod_{\substack{\alpha<\beta, \\ \alpha,\beta \in S}}T_{\beta\alpha}\bigg) I(\bm{\xi}_S)W_{t,x,Y_S}(\bm{\xi}_S) d\bm{\xi}_S
\end{aligned}
\end{equation}
where
\begin{equation*}
 \tilde{c}_{S} =
 \begin{cases}
 \displaystyle(-1)^{N-n}q^{n(n-1)/2}p^{(N-n)(N-n+1)/2} \frac{q^{\sum(S^c) - n|S^c|}}{p^{{\sum(S^c) - |S^c|(|S^c| +1)/2}}}\qbinom{|S|-1}{N-n}&~~\textrm{if}~N \in S, \\[20pt]
 \displaystyle(-1)^{N-n}q^{n(n-1)/2}p^{(N-n)(N-n-1)/2} \frac{q^{\sum(S^c) - n|S^c|-(N-n)}}{p^{{\sum(S^c) - |S^c|(|S^c| +1)/2-(N-n)}}}\qbinom{|S|}{N-n}&~~\textrm{if}~N \notin S.
 \end{cases}
 \end{equation*}
 \end{proposition}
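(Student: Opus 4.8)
The plan is to prove (\ref{331am814}) by generalizing the computations of $(b)$ and $(c)$ in Section \ref{1249am830}, which are precisely its $N=3$ instances ($n=2$ and $n=1$). The first ingredient is structural. Using the explicit formulas for $[\mathbf{A}_\sigma]_{\nu_n,\,\nu_N}$ from \cite{Lee-Raimbekov-2022} (see also Appendix \ref{351am129}) together with $Q_{\beta\alpha}=S_{\beta\alpha}-pT_{\beta\alpha}$, I would first check that $[\mathbf{A}_\sigma]^{-}_{\nu_{n+1},\,\nu_N}=-[\mathbf{A}_\sigma]^{+}_{\nu_n,\,\nu_N}$ for every $\sigma$ (as Table \ref{table2} exhibits for $N=3$). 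This turns the left-hand side of (\ref{331am814}) into the single object $\sum_{\sigma}\dashint_c\cdots\dashint_c\,[\mathbf{A}_\sigma]^{+}_{\nu_n,\,\nu_N}\big(\sum_{x_n=x}-\sum_{x_{n+1}=x}\big)\prod_{i}\xi_{\sigma(i)}^{x_i-y_{\sigma(i)}-1}e^{(\frac{p}{\xi_i}+q\xi_i-1)t}\,d\bm{\xi}$, a difference of two $X$-sums over one common integrand. Second, after grouping $\sigma\in\mathcal{S}_N$ by its values at the high positions $n+1,\dots,N$ (equivalently, by which $N-n$ particle labels land there), I would record the factored form of $[\mathbf{A}_\sigma]^{+}_{\nu_n,\,\nu_N}$: a single-species factor $A_{\sigma'}$ as in (\ref{126am830}), with $\sigma'=\sigma$ restricted to the low positions $1,\dots,n$, times a product of $pT_{\beta\alpha}$ and $S_{\beta\alpha}$ factors coupling the low labels to the high ones, carrying in total exactly $N-n$ genuine $pT$ factors.

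Next comes the contour-and-summation step, run exactly as in the proof of Lemma \ref{334am513} and in the $N=3$ treatment of $(b)$. Writing the low positions $x_1<\cdots$ as $x$ minus sums of positive gaps $z_j$ converts the low part of $\prod_i\xi_{\sigma(i)}^{x_i-y_{\sigma(i)}-1}$ into $\big(\prod\xi_{\sigma(i)}^{x-y_{\sigma(i)}-1}\big)\prod_j(\xi_{\sigma(1)}\cdots\xi_{\sigma(j)})^{-z_j}$, the exact shape occurring in Lemmas \ref{334am513} and \ref{212am827}; writing the high positions as $x$ plus positive gaps $v_j$ gives a factor $\prod_j(\xi_{\sigma(j)}\cdots\xi_{\sigma(N)})^{v_j}$ whose $\sum_{v_j\ge1}$ converges on the small contour $c$ to a function analytic near the origin. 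Treating that function together with the $pT$/$S$ factors above as the analytic $f$ of Remark \ref{704pm913}, I apply Lemma \ref{212am827} to the $P^{-}$ summand (which carries $n$ gap-sums) and Lemma \ref{334am513} to the $P^{+}$ summand (which carries $n-1$ gap-sums plus the variable at the position frozen to $x$), in both cases to the block of integrals over $\bm{\xi}_{\{\sigma(1),\dots,\sigma(n)\}}$. As in the passage from (\ref{100am827}) to (\ref{122am827}) and in the combination of (\ref{1251am5281}) and (\ref{am5282}) into (\ref{1251am5284}), the residue terms produced when the contour of the frozen variable is enlarged for convergence and then shrunk back make the two summands cancel in part; combining what survives and using $1+S_{\beta\alpha}=T_{\beta\alpha}$, as in the combination of (\ref{1251am5284}) and (\ref{608am5284}) into (\ref{608am52844}), leaves a single sum over nonempty $S$ with $I(\bm{\xi}_S)$, the restriction $|S|\ge N-n$ arising because the $N-n$ $pT$-couplings force at least $N-n$ labels into $S$.

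Finally I would collect the coefficients. Summing the residual products of $S$ and $T$ factors over the admissible subsets by the subset-sum identity of Lemma \ref{517pm818} (with $m=N-n$), by the identities of \cite{Tracy-Widom-2008} used in the proof of Lemma \ref{334am513}, and — where a Cauchy-binomial resummation is needed — by Lemma \ref{118am92}, produces $\prod_{\alpha<\beta,\,\alpha,\beta\in S}T_{\beta\alpha}$ times the $q$-binomial $\qbinom{|S|-1}{N-n}$ when $N\in S$ and $\qbinom{|S|}{N-n}$ when $N\notin S$; the dichotomy reflects whether the variable attached to $y_N$, the initial position of the second-class particle, survives into the integral or is set to $1$ by the $f(1,\dots,1)$ mechanism of Remark \ref{704pm913}. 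Keeping track of the accumulated sign $(-1)^{N-n}$ and of the powers of $p$ and $q$ through the deformations then yields the stated $\tilde{c}_S$ in its two cases. I expect the real difficulty to lie exactly here: establishing the general factored form of $[\mathbf{A}_\sigma]^{+}_{\nu_n,\,\nu_N}$ (the tables are written out only for $N=3$) and then keeping exact control of the $p,q$-exponents, the two $q$-binomials, and the split $N\in S$ versus $N\notin S$ across the iterated contour manipulations — not in any individual analytic step, each of which is already furnished by Lemmas \ref{319pm1312023}--\ref{118am92}. A convenient way to organize the bookkeeping is an induction on $N-n$ whose base case $N-n=0$ is the single-species rightmost-particle formula of \cite{Tracy-Widom-2008} (the way $(a)$ was obtained) and whose inductive step peels off the outermost high position exactly as the $\sigma(3)=l$ decomposition does for $(b)$.
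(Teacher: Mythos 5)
Your proposal follows essentially the same route as the paper's proof: the identity $[\mathbf{A}_{\sigma}]^{-}_{\nu_{n+1},\nu_N}=-[\mathbf{A}_{\sigma}]^{+}_{\nu_n,\nu_N}$, the grouping of $\sigma$ by the set of labels occupying positions $n+1,\dots,N$, the factored form of $[\mathbf{A}_{\sigma}]^{+}_{\nu_n,\nu_N}$ from \cite{Lee-Raimbekov-2022}, the application of Lemma \ref{334am513} to the $P^{+}$ block and Lemma \ref{212am827} to the $P^{-}$ block via Remark \ref{704pm913}, and the final collection of coefficients through Lemma \ref{517pm818} with the $N\in S$ versus $N\notin S$ dichotomy, are all exactly the steps the paper carries out (the paper does this directly for each $n$ rather than by induction on $N-n$, but that is an organizational difference only).
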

\begin{proof}
First, we consider the case of $n=N$.
Since $[\mathbf{A}_{\sigma}]^+_{\nu_N,\nu_N}$ is the same as $A_{\sigma}$ in (\ref{126am830}) for the single-species ASEP, $\sum_{X\,\textrm{with}\, x_N = x} P_{Y}^+(X,\nu_N;t)$
is the same as the probability distribution of the rightmost particle's position at time $t$ in the single-species ASEP. Hence, when $n=N$, (\ref{331am814}) is satisfied
by (5.1) with $m=N$ in \cite[Theorem 5.1]{Tracy-Widom-2008}. Suppose that $n<N$. According to Theorem 1.6 in \cite{Lee-Raimbekov-2022},  $[\mathbf{A}_{\sigma}]_{\nu_n,\nu_N} = 0$ if $\sigma^{-1}(N) >n$, so the components of the transition probabilities are written
\begin{equation}\label{324am814}
P_Y^{\pm}(X,\nu_n;t) = \sum_{\substack{\sigma ~\textrm{with}\\ \sigma^{-1}(N) \leq n}}\dashint_c\cdots \dashint_c [\mathbf{A}_{\sigma}]_{\nu_n,\nu_N}^{\pm} \prod_{i=1}^N\Big(\xi_{\sigma(i)}^{x_i - y_{\sigma(i)}-1} e^{(\frac{p}{\xi_i}+q\xi_i - 1)t}\Big)d\bm{\xi}.
\end{equation}
The sum  in (\ref{324am814}) is written
\begin{equation}\label{549pm88}
 \sum_{\substack{\textrm{all}\,A\subset \{1,\dots,N\}\\
 \textrm{with}\,|A|=n,\,N \in A}} ~\sum_{\substack{\sigma\,\textrm{with}\\ \sigma(1),\dots, \sigma(n) \in A}} ~~ \sum_{\substack{\sigma\,\textrm{with}\\ \sigma(n+1),\dots, \sigma(N) \in A^c, \\ \sigma(1),\dots,\sigma(n)\,\textrm{fixed}}},
\end{equation}
and using this, we evaluate the sum on the left-hand side of (\ref{331am814}) as in the following manner:
\begin{itemize}
\item [(\rmnum{1})] \textbf{Step 1.} In this step, we compute the second sum on  the left-hand side of (\ref{331am814}). The second sum is written
\begin{equation*}
 \sum_{\substack{\textrm{all}\,A\subset \{1,\dots,N\}\\
\textrm{with}\, |A|=n,\,N \in A}}\bigg(\underbrace{\sum_{\substack{X\,\textrm{with} \\x_n = x}}~\sum_{\substack{\sigma\,\textrm{with}\\ \sigma(1),\dots, \sigma(n) \in A}} ~ \sum_{\substack{\sigma\,\textrm{with}\\ \sigma(n+1),\dots, \sigma(N) \in A^c\\ \sigma(1),\dots,\sigma(n)\,\textrm{fixed}}} \dashint_c\cdots \dashint_c  [\mathbf{A}_{\sigma}]_{\nu_n,\nu_N}^{+}\Big(\xi_{\sigma(i)}^{x_i - y_{\sigma(i)}-1} e^{(\frac{p}{\xi_i}+q\xi_i - 1)t}\Big)d\bm{\xi}}_{(\#)}\bigg).
\end{equation*}
We compute $(\#)$ for a fixed $A \subset \{1,\dots, N\}$ with $|A| = n$ and $N \in A$.
Set $x_n = x$ and
\begin{equation*}
\begin{aligned}
& x_{n-1} = x-z_{n-1}, x_{n-2} = x-z_{n-1}-z_{n-2},\dots, x_1 = x-z_{n-1}-\cdots -z_1, \\
& x_{n+1} =x+v_1, x_{n+2} = x+v_1+v_2,\dots, x_N = x+v_1+\cdots + v_{N-n}
\end{aligned}
\end{equation*}
where $z_i$ and $v_i$ are positive integers. Then,  $(\#)$   is written
\begin{equation}\label{909pm88}
\begin{aligned}
&\sum_{z_1,\dots,z_{n-1}}~\sum_{v_1,\dots,v_{N-n}} ~~\sum_{\substack{\sigma\,\textrm{with}\\ \sigma(1),\dots, \sigma(n) \in A}} ~ \sum_{\substack{\sigma\,\textrm{with}\\ \sigma(n+1),\dots, \sigma(N) \in A^c\\ \sigma(1),\dots,\sigma(n)\,\textrm{fixed}}}  \\[8pt]
&\hspace{0.5cm}\dashint_c\cdots \dashint_c [\mathbf{A}_{\sigma}]^+_{\nu_n,\nu_N}\prod_{i=1}^{n-1}\big(\xi_{\sigma(1)}\cdots\xi_{\sigma(i)}\big)^{-z_i}\prod_{i=1}^{N-n} \big(\xi_{\sigma(n+i)}\cdots\xi_{\sigma(N)}\big)^{v_i}W_{t,x,Y}(\bm{\xi})d\bm{\xi}.
\end{aligned}
\end{equation}
By Theorem 1.7, Proposition 1.8 in \cite{Lee-Raimbekov-2022} and (\ref{1119pm527}),  if  $\sigma(1),\dots, \sigma(n) \in A$ and $N \in A,$
\begin{equation}\label{224am810}
\begin{aligned}
[\mathbf{A}_{\sigma}]_{\nu_{n},\nu_N}^{+} =~& \Big(\prod_{\alpha\in A^c}pT_{N\alpha}\Big)\times \bigg(\prod_{\substack{\alpha<\beta \neq N\\
\beta \in A, \,\alpha \in A^c}} S_{\beta\alpha}\bigg) \\
&\times \bigg(\prod_{\substack{\textrm{all inversions $(\beta,\alpha)$ } \\
\textrm{in}~\sigma(1)\cdots \sigma(n)}} S_{\beta\alpha}\bigg)~ \times ~  \bigg(\prod_{\substack{\textrm{all inversions $(\beta,\alpha)$ } \\
\textrm{in}~\sigma(n+1)\cdots \sigma(N)}} S_{\beta\alpha}\bigg).
\end{aligned}
\end{equation}
Let us sum  the integral in (\ref{909pm88}) over $v_1,\dots, v_{N-n}$ and then over all permutations $\sigma$ with $\sigma(n+1),\dots, \sigma(N) \in A^c$ and $\sigma(1),\dots, \sigma(n)$ fixed. Then, (\ref{909pm88}) becomes
\begin{equation}\label{1012pm87}
\begin{aligned}
&\sum_{z_1,\dots, z_{n-1}}\sum_{\substack{\sigma\,\textrm{with}\\ \sigma(1),\dots, \sigma(n) \in A}} \dashint_c\cdots \dashint_c f(\bm{\xi})\bigg(\prod_{\substack{\textrm{all inversions $(\beta,\alpha)$ } \\
\textrm{in}~\sigma(1)\cdots \sigma(n)}} S_{\beta\alpha}\bigg) \prod_{i=1}^{n-1}\big(\xi_{\sigma(1)}\cdots\xi_{\sigma(i)}\big)^{-z_i}  d\bm{\xi}
\end{aligned}
\end{equation}
where
\begin{equation*}\label{418pm810}
\begin{aligned}
& f(\bm{\xi}) = p^{(N-n)(N-n-1)/2} \Big(\prod_{\alpha\in A^c}pT_{N\alpha}\Big) \bigg(\prod_{\substack{\alpha<\beta \neq N\\
\beta \in A, \,\alpha \in A^c}} S_{\beta\alpha}\bigg) \\
&\hspace{4cm}  \times   \bigg(\prod_{\substack{\alpha<\beta \\\alpha,\beta \in A^c }} T_{\beta\alpha}\bigg) \frac{ \prod_{a \in A^c}\xi_a}{\prod_{a\in A^c}(1-\xi_{a})} W_{t,x,Y}(\bm{\xi})
\end{aligned}
\end{equation*}
by using the identity (1.6) in \cite{Tracy-Widom-2008}. Here, we note that $f(\bm{\xi})$ can be made analytic inside a sufficiently large circle centered at the origin except at the origin for variable $\xi_i$ where $i \in A$ for the use of Lemma \ref{334am513} (see Remark \ref{704pm913}). Then, by Lemma \ref{334am513} and \textit{Remark} \ref{704pm913},    (\ref{1012pm87}) becomes
\begin{equation}\label{418pm810}
\sum_{\emptyset \neq S'\subset A}\tilde{c}_{S'}\dashint_c~~\cdots~~ \dashint_c\bigg(\prod_{\substack{\alpha<\beta,\\ \alpha,\beta \in S'}}T_{\beta\alpha}\bigg) I(\bm{\xi}_{S'})f(\bm{\xi})\big|_{\xi_i = 1,\,i \in A\setminus S'}d\bm{\xi}_{S'}d\bm{\xi}_{A^c}
\end{equation}
where
\begin{equation}\label{1225am92}
\tilde{c}_{S'} = q^{n(n-1)/2}\times \frac{q^{\sum_A(A\setminus S') - n|A\setminus S'|}}{p^{{\sum_A(A\setminus S') - |A\setminus S'|(|A\setminus S'| +1)/2}}}.
\end{equation}
Now, we evaluate $f(\bm{\xi})\big|_{\xi_i = 1,\, i \in A\setminus S'}$. Let $<a>$ be the number of elements in $A\setminus S'$ larger than $a\in A^c$. Then,
noting that $pT_{\beta\alpha}\big|_{\xi_{\beta}=1} = 1$ and $S_{\beta\alpha}\big|_{\xi_{\beta}=1} =q/p$,
 we obtain
\begin{equation*}
\begin{aligned}
&f(\bm{\xi})\big|_{\xi_i = 1,\,i \in A\setminus S'} =  p^{(N-n)(N-n-1)/2}\bigg(\prod_{\substack{\alpha<\beta \\\alpha,\beta \in A^c }} T_{\beta\alpha}\bigg) \frac{ \prod_{a \in A^c}\xi_a}{\prod_{a\in A^c}(1-\xi_{a})} W_{t,x,Y_{S' \cup A^c}}(\bm{\xi}_{S' \cup A^c}) \\[15pt]
& \hspace{1cm} \times
 \begin{cases}
 \Big(\displaystyle\prod_{\alpha\in A^c}pT_{N\alpha}\Big) \bigg(\displaystyle \prod_{\substack{\alpha<\beta \neq N\\
\beta \in  S', \,\alpha \in A^c}} S_{\beta\alpha}\bigg)
 \Big(\frac{q}{p}\Big)^{\sum_{a\in A^c}<a>} & ~~\textrm{if}~ N \in S', \\[30pt]
 \bigg(\displaystyle \prod_{\substack{\alpha<\beta \neq N\\
\beta \in  S', \,\alpha \in A^c}} S_{\beta\alpha}\bigg)
 \Big(\frac{q}{p}\Big)^{\sum_{a\in A^c}<a>- |A^c|} & ~~\textrm{if}~ N \notin S'.
\end{cases}
\end{aligned}
\end{equation*}
Hence,  noting that
 \begin{equation}\label{1221am829}
  \Sigma_A(A\setminus S') = \Sigma(A\setminus S') - \sum_{a\in A^c} <a>,
  \end{equation}
 we obtain
\begin{equation}\label{418pm81011}
\begin{aligned}
(\#)~=~&\sum_{\substack{S'\subset A,\\ N \in S'}}q^{n(n-1)/2}p^{(N-n)(N-n+1)/2} \frac{q^{\sum(A\setminus S') - n|A\setminus S'|}}{p^{{\sum(A\setminus S') - |A\setminus S'|(|A\setminus S'| +1)/2}}}\\[5pt]
&\hspace{2cm} \times \dashint_c \cdots \dashint_c\bigg(\prod_{\substack{\alpha<\beta,\\ \alpha,\beta \in S'}}T_{\beta\alpha}\bigg) \bigg(\prod_{\substack{\alpha<\beta,\\ \alpha,\beta \in A^c}}T_{\beta\alpha}\bigg)\bigg(\displaystyle \prod_{\substack{\alpha<\beta \neq N\\
\beta \in  S', \,\alpha \in A^c}} S_{\beta\alpha}\bigg)\bigg(\prod_{\alpha\in A^c}T_{N\alpha}\bigg) \\[10pt]
& \hspace{3cm} \times I(\bm{\xi}_{S'})\frac{ \prod_{a \in A^c}\xi_a}{\prod_{a\in A^c}(1-\xi_{a})} W_{t,x,Y_{S' \cup A^c}}(\bm{\xi}_{S'\cup A^c})d\bm{\xi}_{S'\cup A^c} \\[10pt]
&+\sum_{\substack{ \emptyset \neq S'\subset A,\\ N \notin S'}}q^{n(n-1)/2}p^{(N-n)(N-n-1)/2} \frac{q^{\sum(A\setminus S') - n|A\setminus S'|-|A^c|}}{p^{{\sum(A\setminus S') - |A\setminus S'|(|A\setminus S'| +1)/2-|A^c|}}}\\[5pt]
&\hspace{2cm} \times\dashint_c \cdots \dashint_c\bigg(\prod_{\substack{\alpha<\beta,\\ \alpha,\beta \in S'}}T_{\beta\alpha}\bigg) \bigg(\prod_{\substack{\alpha<\beta,\\ \alpha,\beta \in A^c}}T_{\beta\alpha}\bigg) \bigg(\displaystyle \prod_{\substack{\alpha<\beta \neq N\\
\beta \in  S', \,\alpha \in A^c}} S_{\beta\alpha}\bigg) \\[10pt]
& \hspace{3cm} \times I(\bm{\xi}_{S'})\frac{ \prod_{a \in A^c}\xi_a}{\prod_{a\in A^c}(1-\xi_{a})} W_{t,x,Y_{S' \cup A^c}}(\bm{\xi}_{S'\cup A^c})d\bm{\xi}_{S'\cup A^c}.
\end{aligned}
\end{equation}
\item [(\rmnum{2})] \textbf{Step 2.} In this step, we compute the first sum on the left-hand side of (\ref{331am814}). The first sum   is written
\begin{equation*}
 \sum_{\substack{\textrm{all}\,A\subset\{1,\dots,N\}\\
 \textrm{with}\,|A|=n,\,N \in A}}\bigg(\underbrace{\sum_{\substack{X\,\textrm{with} \\x_{n+1} = x}}~\sum_{\substack{\sigma\,\textrm{with}\\ \sigma(1),\dots, \sigma(n) \in A}} ~ \sum_{\substack{\sigma\,\textrm{with}\\ \sigma(n+1),\dots, \sigma(N) \in A^c,\\\sigma(1),\dots,\sigma(n)\,\textrm{fixed}}} \dashint_c\cdots \dashint_c  [\mathbf{A}_{\sigma}]_{\nu_{n+1},\nu_N}^{-}\Big(\xi_{\sigma(i)}^{x_i - y_{\sigma(i)}-1} e^{(\frac{p}{\xi_i}+q\xi_i - 1)t}\Big)d\bm{\xi}}_{(\#\#)}\bigg).
\end{equation*}
In a similar way to Step 1, we write $(\#\#)$   as
\begin{equation}\label{9099pm88}
\begin{aligned}
&\sum_{z_1,\dots,z_{n}}~\sum_{v_1,\dots,v_{N-n-1}} ~~\sum_{\substack{\sigma\,\textrm{with}\\ \sigma(1),\dots, \sigma(n) \in A}} ~ \sum_{\substack{\sigma\,\textrm{with}\\ \sigma(n+1),\dots, \sigma(N) \in A^c,\\\sigma(1),\dots,\sigma(n)\,\textrm{fixed}}}  \\[8pt]
&\hspace{0.5cm}\dashint_c\cdots \dashint_c [\mathbf{A}_{\sigma}]^-_{\nu_{n+1},\nu_N}\prod_{i=1}^{n}\big(\xi_{\sigma(1)}\cdots\xi_{\sigma(i)}\big)^{-z_i}\prod_{i=1}^{N-n-1} \big(\xi_{\sigma(n+1+i)}\cdots\xi_{\sigma(N)}\big)^{v_i}W_{t,x,Y}(\bm{\xi})d\bm{\xi}.
\end{aligned}
\end{equation}
Note that $[\mathbf{A}_{\sigma}]^-_{\nu_{n+1},\nu_N}=-[\mathbf{A}_{\sigma}]^+_{\nu_{n},\nu_N}$ by Theorem 1.7, Proposition 1.8 in \cite{Lee-Raimbekov-2022} and (\ref{1119pm527}). In (\ref{9099pm88}), we first sum  over $v_1,\dots, v_{N-n-1}$ and then sum over all permutations $\sigma$ with $\sigma(n+1),\dots, \sigma(N) \in A^c$ and $\sigma(1),\dots, \sigma(n)$ fixed. Then, (\ref{9099pm88}) becomes
\begin{equation}\label{10121pm87}
\begin{aligned}
&\sum_{z_1,\dots, z_{n}}\sum_{\substack{\sigma\,\textrm{with}\\ \sigma(1),\dots, \sigma(n) \in A}} \dashint_c\cdots \dashint_c f(\bm{\xi})\bigg(\prod_{\substack{\textrm{all inversions $(\beta,\alpha)$ } \\
\textrm{in}~\sigma(1)\cdots \sigma(n)}} S_{\beta\alpha}\bigg) \prod_{i=1}^{n}\big(\xi_{\sigma(1)}\cdots\xi_{\sigma(i)}\big)^{-z_i}  d\bm{\xi}
\end{aligned}
\end{equation}
where
\begin{equation}\label{418809pm810}
\begin{aligned}
& f(\bm{\xi}) = p^{(N-n)(N-n-1)/2} \Big(\prod_{\alpha\in A^c}pT_{N\alpha}\Big) \bigg(\prod_{\substack{\alpha<\beta \neq N\\
\beta \in A, \,\alpha \in A^c}} S_{\beta\alpha}\bigg) \\
&\hspace{3cm}  \times   \bigg(\prod_{\substack{\alpha<\beta \\\alpha,\beta \in A^c }} T_{\beta\alpha}\bigg) (-1)^{|A^c|+1}I(\bm{\xi}_{A^c}) W_{t,x,Y}(\bm{\xi})
\end{aligned}
\end{equation}
by using the identity (1.6) in \cite{Tracy-Widom-2008}. Here, we note that (\ref{418809pm810})  can be made analytic inside a sufficiently large circle centered at the origin except at the origin for variable $\xi_i$ where $i \in A$ for the use of Lemma \ref{212am827} (see Remark \ref{704pm913}). Then, by Lemma \ref{212am827} and \textit{Remark} \ref{704pm913},   (\ref{10121pm87}) becomes
\begin{equation}\label{4189pm810}
\begin{aligned}
&\dashint_c\cdots \dashint_c f(\bm{\xi})\big|_{\xi_i = 1,\, i \in A}d\bm{\xi}_{A^c}\\
&\hspace{1cm} +\sum_{\emptyset \neq S'\subset A}\tilde{c}_{S'}\dashint_c~~\cdots~~ \dashint_c\bigg(\prod_{\substack{\alpha<\beta,\\ \alpha,\beta \in S'}}T_{\beta\alpha}\bigg) I(\bm{\xi}_{S'})f(\bm{\xi})\big|_{\xi_i = 1,\, i \in A\setminus S'}d\bm{\xi}_{S'}d\bm{\xi}_{A^c}
\end{aligned}
\end{equation}
where $\tilde{c}_{S'}$ is given by (\ref{1225am92}). Now, if we evaluate $f(\bm{\xi})\big|_{\xi_i = 1,\, i \in A}$ and $f(\bm{\xi})\big|_{\xi_i = 1,\, i \in A\setminus S'}$ in (\ref{4189pm810}) by the same method as in Step 1, then we can show
\begin{equation}\label{430pm0901}
\begin{aligned}
(\#\#)~=~ &\sum_{\substack{S'\subset A,\\ N \in S'}}q^{n(n-1)/2}p^{(N-n)(N-n+1)/2} \frac{q^{\sum(A\setminus S') - n|A\setminus S'|}}{p^{{\sum(A\setminus S') - |A\setminus S'|(|A\setminus S'| +1)/2}}}\\[5pt]
&\hspace{1cm} \times \dashint_c \cdots \dashint_c\bigg(\prod_{\substack{\alpha<\beta,\\ \alpha,\beta \in S'}}T_{\beta\alpha}\bigg) \bigg(\prod_{\substack{\alpha<\beta,\\ \alpha,\beta \in A^c}}T_{\beta\alpha}\bigg)\bigg(\displaystyle \prod_{\substack{\alpha<\beta \neq N\\
\beta \in  S', \,\alpha \in A^c}} S_{\beta\alpha}\bigg)\bigg(\prod_{\alpha\in A^c}T_{N\alpha}\bigg) \\[10pt]
& \hspace{2cm} \times J(\bm{\xi}_{S'})I(\bm{\xi}_{A^c})(-1)^{|A^c|}W_{t,x,Y_{S' \cup A^c}}(\bm{\xi}_{S'\cup A^c})d\bm{\xi}_{S'\cup A^c} \\[10pt]
&+~\sum_{\substack{S'\subset A,\\ N \notin S'}}q^{n(n-1)/2}p^{(N-n)(N-n-1)/2} \frac{q^{\sum(A\setminus S') - n|A\setminus S'|-|A^c|}}{p^{{\sum(A\setminus S') - |A\setminus S'|(|A\setminus S'| +1)/2-|A^c|}}}\\[5pt]
&\hspace{1cm} \times\dashint_c \cdots \dashint_c\bigg(\prod_{\substack{\alpha<\beta,\\ \alpha,\beta \in S'}}T_{\beta\alpha}\bigg) \bigg(\prod_{\substack{\alpha<\beta,\\ \alpha,\beta \in A^c}}T_{\beta\alpha}\bigg) \bigg(\displaystyle \prod_{\substack{\alpha<\beta \neq N\\
\beta \in  S', \,\alpha \in A^c}} S_{\beta\alpha}\bigg) \\[10pt]
& \hspace{2cm} \times J(\bm{\xi}_{S'})I(\bm{\xi}_{A^c})(-1)^{|A^c|} W_{t,x,Y_{S' \cup A^c}}(\bm{\xi}_{S'\cup A^c})d\bm{\xi}_{S'\cup A^c}.
\end{aligned}
\end{equation}
Note that $S'$ in the second sum of $(\ref{430pm0901})$ is allowed to be the empty set.
\item [(\rmnum{3})] \textbf{Step 3.} In this step, we sum $(\#)$ and $(\#\#)$. Recall that all $S'$ in the sum $(\#)$ are nonempty sets and  $S'$ in the sum $(\#\#)$ can be the empty set. Using that when $S'\neq \emptyset$,
\begin{equation*}
I(\bm{\xi}_{S'})\frac{ \prod_{a \in A^c}\xi_a}{\prod_{a\in A^c}(1-\xi_{a})} +J(\bm{\xi}_{S'})I(\bm{\xi}_{A^c})(-1)^{|A^c|} = (-1)^{|A^c|} I(\bm{\xi}_{S'\cup A^c}),
\end{equation*}
 we obtain  the sum of $(\#)$ and $(\#\#)$  in the form of
 \begin{equation}\label{4181pm81011}
 \sum_{S' \subset A}c_{A,S'}\dashint_c \cdots \dashint_c K(A^c,S')I(\bm{\xi}_{S'\cup A^c}) W_{t,x,Y_{S' \cup A^c}}(\bm{\xi}_{S'\cup A^c})\bm{d\xi}_{S'\cup A^c}
 \end{equation}
 where
 \begin{equation}\label{412pm818}
{c}_{A,S'} =
 \begin{cases}
 (-1)^{N-n}q^{n(n-1)/2}p^{(N-n)(N-n+1)/2} \frac{q^{\sum(A\setminus S') - n|A\setminus S'|}}{p^{{\sum(A\setminus S') - |A\setminus S'|(|A\setminus S'| +1)/2}}}&~~\textrm{if}~N \in S', \\[15pt]
 (-1)^{N-n}q^{n(n-1)/2}p^{(N-n)(N-n-1)/2} \frac{q^{\sum(A\setminus S') - n|A\setminus S'|-|A^c|}}{p^{{\sum(A\setminus S') - |A\setminus S'|(|A\setminus S'| +1)/2-|A^c|}}}&~~\textrm{if}~N \notin S',
 \end{cases}
 \end{equation}
 and
 \begin{equation*}
  K(A^c,S') =
 \begin{cases}
 \bigg(\prod_{\substack{\alpha<\beta,\\ \alpha,\beta \in S'}}T_{\beta\alpha}\bigg)\bigg(\prod_{\substack{\alpha<\beta,\\ \alpha,\beta \in A^c}}T_{\beta\alpha}\bigg) \bigg(\displaystyle \prod_{\substack{\alpha<\beta \neq N\\
\beta \in  S', \,\alpha \in A^c}} S_{\beta\alpha}\bigg)\bigg(\prod_{\alpha\in A^c}T_{N\alpha}\bigg)&~~\textrm{if}~N \in S', \\[20pt]
  \bigg(\prod_{\substack{\alpha<\beta,\\ \alpha,\beta \in S'}}T_{\beta\alpha}\bigg)\bigg(\prod_{\substack{\alpha<\beta,\\ \alpha,\beta \in A^c}}T_{\beta\alpha}\bigg) \bigg(\displaystyle \prod_{\substack{\alpha<\beta \neq N\\
\beta \in  S', \,\alpha \in A^c}} S_{\beta\alpha}\bigg)&~~\textrm{if}~N \notin S'.
 \end{cases}
 \end{equation*}
\item [(\rmnum{4})] \textbf{Step 4.} Finally, we compute
\begin{equation}\label{437pm818}
\sum_{\substack{A~\textrm{with}\\ |A| = n,\,N \in A}}~\sum_{S' \subset A}{c}_{A,S'}\dashint_c \cdots \dashint_c K(A^c,S')I(\bm{\xi}_{S'\cup A^c}) W_{t,x,Y_{S' \cup A^c}}(\bm{\xi}_{S'\cup A^c})\bm{d\xi}_{S'\cup A^c}.
\end{equation}
Note that if we let $S = S' \cup A^c$, then  $\Sigma(A\setminus S') = \Sigma(S^c)$ and $|A\setminus S'| = |S^c|$. This implies that if a pair $(A,S')$ such that $N \in A \subset \{1,\dots, N\}$ with $|A| = n$ and $S'\subset A$, and another pair $(A',S'')$ such that  $N \in A' \subset \{1,\dots, N\}$ with $|A'| = n$ and $S''\subset A'$ satisfy  $S' \cup A^c = S'' \cup A'^c$, then  $c_{A,S'} = c_{A',S''}$.
Hence, the sum (\ref{437pm818}) is equivalent to
\begin{equation}\label{440pm818}
\begin{aligned}
&\sum_{\substack{S~\textrm{with}\\ |S| > N-n}}~\sum_{(S_1,S_2)}\hat{c}_{S}\dashint_c \cdots \dashint_c K(S_2,S_1)I(\bm{\xi}_{S}) W_{t,x,Y_{S}}(\bm{\xi}_{S})\bm{d\xi}_{S}\\
&\hspace{2cm}+~\sum_{\substack{S~\textrm{with}\\ |S| = N-n,\\N \notin S}}\hat{c}_{S}\dashint_c \cdots \dashint_c K(S,\emptyset)I(\bm{\xi}_{S}) W_{t,x,Y_{S}}(\bm{\xi}_{S})\bm{d\xi}_{S}
\end{aligned}
\end{equation}
where
 \begin{equation*}
 \hat{c}_{S} =
 \begin{cases}
 (-1)^{N-n}q^{n(n-1)/2}p^{(N-n)(N-n+1)/2} \frac{q^{\sum(S^c) - n|S^c|}}{p^{{\sum(S^c) - |S^c|(|S^c| +1)/2}}}&~~\textrm{if}~N \in S, \\[15pt]
 (-1)^{N-n}q^{n(n-1)/2}p^{(N-n)(N-n-1)/2} \frac{q^{\sum(S^c) - n|S^c|-(N-n)}}{p^{{\sum(S^c) - |S^c|(|S^c| +1)/2-(N-n)}}}&~~\textrm{if}~N \notin S
 \end{cases}
 \end{equation*}
 and   $\sum_{(S_1,S_2)}$ is  the sum over all partitions of $S = S_1 \cup S_2$ such that $|S_2|=N-n$ and $N \notin S_2$. By using Lemma \ref{517pm818},
  \begin{equation*}
 \sum_{(S_1,S_2)}K(S_2,S_1) =
 \begin{cases}
 \displaystyle\qbinom{|S|-1}{N-n} \prod_{\substack{\alpha<\beta\\ \alpha,\beta \in S}}T_{\beta\alpha} &~~\textrm{if}~N \in S, \\[30pt]
  \displaystyle\qbinom{|S|}{N-n} \prod_{\substack{\alpha<\beta\\ \alpha,\beta \in S}}T_{\beta\alpha} &~~\textrm{if}~N \notin S
 \end{cases}
 \end{equation*}
 and we note that
 \begin{equation*}
 K(S,\emptyset) = \prod_{\substack{\alpha<\beta,\\ \alpha,\beta \in S}}T_{\beta\alpha}.
 \end{equation*}
 Hence, (\ref{440pm818}) implies  (\ref{331am814}).
\end{itemize}
\end{proof}
(\textit{Continuing the proof of} Theorem \ref{102am830})
Now, finally, we sum (\ref{331am814}) over $n=1,\dots, N$. Note that by the convention  (\ref{510pm822}),
\begin{equation*}
\sum_{n=1}^N\tilde{c}_S =
\begin{cases}
\displaystyle \sum_{n=N - |S| +1}^N\tilde{c}_S&~~\textrm{if}~N \in S,\\[20pt]
\displaystyle \sum_{n=N - |S|}^N\tilde{c}_S&~~\textrm{if}~N \notin S.
\end{cases}
\end{equation*}
If $N \in S$, then
\begin{equation*}
\sum_{n=N - |S| +1}^N\tilde{c}_S = \displaystyle \bigg(\prod_{i=1}^{|S|-1}(q^i-p^i)\bigg)\Big(\frac{q}{p}\Big)^{\Sigma(S^c) - |S^c|(|S^c|+1)/2}
\end{equation*}
by  Lemma \ref{118am92} where $l-1$ is replaced by $|S|-1$ and $k$ is replaced by $N-n$. For the case  of $N \notin S$, observe that
\begin{equation*}
\Big(\frac{q}{p}\Big)^{\Sigma(S^c) - |S^c|(|S^c|+1)/2} = \Big(\frac{q}{p}\Big)^{|S|}\Big(\frac{q}{p}\Big)^{\Sigma(S^c) - |S^c|(|S^c|-1)/2-N}
\end{equation*}
and then apply Lemma \ref{118am92} where $l-1$ is replaced by $|S|$ and $k$ is replaced by $N-n$. Then, we obtain
\begin{equation*}
\sum_{n=N - |S|}^N\tilde{c}_S = \frac{1}{p^{|S|}}\bigg(\prod_{i=1}^{|S|}(q^i-p^i)\bigg)\Big(\frac{q}{p}\Big)^{\Sigma(S^c) - |S^c|(|S^c|-1)/2-N}.
\end{equation*}
Hence,
\begin{equation*}
c_S=
\begin{cases}
\displaystyle \bigg(\prod_{i=1}^{|S|-1}(q^i-p^i)\bigg)\Big(\frac{q}{p}\Big)^{\Sigma(S^c) - |S^c|(|S^c|+1)/2}&~~\textrm{if}~N \in S,\\[20pt]
\displaystyle \frac{1}{p^{|S|}}\bigg(\prod_{i=1}^{|S|}(q^i-p^i)\bigg)\Big(\frac{q}{p}\Big)^{\Sigma(S^c) - |S^c|(|S^c|-1)/2-N}&~~\textrm{if}~N \notin S.
\end{cases}
\end{equation*}
This completes the proof of Theorem \ref{102am830}. \qed

\begin{appendices}
\section{Formulas of $[\mathbf{A}_{\sigma}]_{\nu_n,\nu_N}$ in (\ref{533pm813})}\label{351am129}
In this appendix, we briefly summarize how to find the formulas of $[\mathbf{A}_{\sigma}]_{\nu_n,\nu_N}$ in (\ref{533pm813}), which were developed in  \cite{Lee-2020,Lee-Raimbekov-2022}. For a permutation  $\sigma$ in the symmetric group $\mathcal{S}_N$, $\mathbf{A}_{\sigma}$ is an $N^N \times N^N$ matrix which is obtained as follows. Let $\mathrm{T}_i$ be the simple transposition  which interchanges the $i^{th}$ element and the $(i+1)^{st}$ element and leaves everything else fixed. Then, any permutation $\sigma \in \mathcal{S}_N$ can be written as a product of simple transpositions, that is,
\begin{equation}\label{853pm352023}
\sigma = \mathrm{T}_{i_j}\cdots \mathrm{T}_{i_1}
\end{equation}
for some $i_1,\dots, i_j \in \{1,\dots, N-1\}$. Of course, the form of (\ref{853pm352023}) is not unique. For example, a permutation $(321) \in \mathcal{S}_3$ is written as
\begin{equation*}
(321) =\mathrm{T}_1\mathrm{T}_2\mathrm{T}_1 = \mathrm{T}_2\mathrm{T}_1\mathrm{T}_2.
\end{equation*}
 If $\mathrm{T}_i$ interchanges  $\alpha$ at the $i^{th}$ slot and $\beta$ at the $(i+1)^{st}$ slot in a permutation, that is,
\begin{equation*}
\mathrm{T}_i(~\cdots~ \alpha\beta ~\cdots) = (~\cdots~ \beta\alpha ~\cdots),
\end{equation*}
we denote this $\mathrm{T}_i$  by $\mathrm{T}_i(\beta,\alpha)$ to show explicitly which numbers are interchanged.   For example,
\begin{equation*}
(321) = \mathrm{T}_1(3,2)\mathrm{T}_2(3,1)\mathrm{T}_1(2,1).
\end{equation*}
 Hence, we may have an expression
\begin{equation}\label{123am107}
\sigma = \mathrm{T}_{i_j}(\beta_j,\alpha_j)\cdots \mathrm{T}_{i_1}(\beta_1,\alpha_1)
\end{equation}
for any permutation $\sigma$.
 Given $\mathrm{T}_{i}(\beta,\alpha)$, we define $N^N \times N^N$ matrix $\mathbf{T}_{i}(\beta,\alpha)$ by
\begin{equation*}\label{310am36}
\mathbf{T}_{i}(\beta,\alpha) =\underbrace{\mathbf{I}_N ~\otimes~ \cdots~ \otimes~  \mathbf{I}_N}_{(i-1)~\textrm{times}}~ \otimes~ \mathbf{R}_{\beta\alpha}~ \otimes ~ \underbrace{\mathbf{I}_N ~\otimes~ \cdots ~\otimes~  \mathbf{I}_N}_{(N-i-1)~\textrm{times}}
\end{equation*}
where $\mathbf{I}_N$ is the $N \times N$ identity matrix and  $\mathbf{R}_{\beta\alpha}$ is an $N^2 \times N^2$ matrix, where $\mathbf{R}_{\beta\alpha}$ is defined as follows. Let us label all columns and rows of $N^2 \times N^2$ matrices by $ij$ where $i,j =1,\dots, N$ in the lexicographical order. The $(ij,kl)$ entry of the matrix $\mathbf{R}_{\beta\alpha}$  is defined to be
\begin{equation*}
\big[\mathbf{R}_{\beta\alpha}\big]_{ij,kl} = \begin{cases}
S_{\beta\alpha}& ~\textrm{if}~~ij=kl~\textrm{with}~i=j;\\[3pt]
P_{\beta\alpha}& ~\textrm{if}~~ij=kl~\textrm{with}~i<j;\\[3pt]
Q_{\beta\alpha}& ~\textrm{if}~~ij=kl~\textrm{with}~i>j;\\[3pt]
pT_{\beta\alpha}&~\textrm{if}~~ij=lk~\textrm{with}~i<j;\\[3pt]
qT_{\beta\alpha}&~\textrm{if}~~ij=lk~\textrm{with}~i>j;\\[3pt]
0 &~\textrm{for all other cases}
\end{cases}
\end{equation*}
where
\begin{equation*}
\begin{aligned}
S_{\beta\alpha} =&  -\frac{p+q\xi_{\alpha}\xi_{\beta} - \xi_{\beta}}{p+q\xi_{\alpha}\xi_{\beta} - \xi_{\alpha}},&~P_{\beta\alpha} =  \frac{(p-q\xi_{\alpha})(\xi_{\beta}-1)}{p+q\xi_{\alpha}\xi_{\beta} - \xi_{\alpha}}\\
 T_{\beta\alpha} =& \frac{\xi_{\beta}-\xi_{\alpha}}{p+q\xi_{\alpha}\xi_{\beta} - \xi_{\alpha}},&  Q_{\beta\alpha} =\frac{(p-q\xi_{\beta})(\xi_{\alpha}-1)}{p+q\xi_{\alpha}\xi_{\beta} - \xi_{\alpha}}.
\end{aligned}
\end{equation*}
Now, for a given expression  (\ref{123am107}), define
\begin{equation}\label{932pm35}
\mathbf{A}_{\sigma} := \mathbf{T}_{i_j}(\beta_j,\alpha_j)~\cdots~ \mathbf{T}_{i_1}(\beta_1,\alpha_1).
\end{equation}
It is a known fact that $\mathbf{A}_{\sigma} $ is well-defined in the sense that  (\ref{932pm35}) represents the same matrix for any expression (\ref{123am107}). We have just constructed an $N^N \times N^N$ matrix $\mathbf{A}_{\sigma}$ for a given permutation $\sigma$. Now, let us label all columns and rows of $\mathbf{A}_{\sigma}$  by $\nu=i_1\cdots i_N$ where $i_j \in \{1,\dots, N\},\, j=1,\dots, N$  in the lexicographical order. In this paper, we denote $2\cdots 212\cdots 2$ by $\nu_n$ if $1$ is the $n^{\textrm{th}}$ leftmost number.

Since $\mathbf{A}_{\sigma}$ is a product of matrices, in general, the matrix elements  $[\mathbf{A}_{\sigma}]_{\pi,\nu}$ are possibly written as a sum of the products of the matrix elements of $\mathbf{T}_{i_j}(\beta_j,\alpha_j),\dots, \mathbf{T}_{i_1}(\beta_1,\alpha_1)$ in (\ref{932pm35}). However, it was shown  in \cite{Lee-Raimbekov-2022} that, for some special cases of initial order of particles $\nu$, $[\mathbf{A}_{\sigma}]_{\pi,\nu}$ are expressed as a product of the matrix elements of $\mathbf{T}_{i_j}(\beta_j,\alpha_j),\dots, \mathbf{T}_{i_1}(\beta_1,\alpha_1)$, not a sum of the products. In particular, for the initial order of particles $\nu_N$, that is, $2\cdots 21$, the formulas $[\mathbf{A}_{\sigma}]_{\nu_n,\nu_N}$ are given in Theorem 1.2 (a), Theorem 1.4, Theorem 1.6, Theorem 1.7, and Proposition 1.8 in \cite{Lee-Raimbekov-2022}.
\end{appendices}
\\ \\
\noindent \textbf{Conflict of Interest} The authors declare that they have no conflict of interest.
\\ \\
\noindent \textbf{Funding}
This work was supported by the faculty development competitive research grant (021220FD4251) by Nazarbayev University.
\\ \\
\noindent \textbf{Data Availability}
This work is not based on data, so has no data to share.
\\ \\
\noindent \textbf{Acknowledgment} This paper is based on the master thesis of one of the authors \cite{Zhanibek-2022}.

\end{document}